\setlist{itemsep=1pt,topsep=-5pt,parsep=0pt,partopsep=-2pt}
\newtheorem{thm}{Theorem}[section]
\newtheorem{cor}[thm]{Corollary}
\newtheorem{lem}[thm]{Lemma}
\newtheorem{pro}[thm]{Proposition}
\theoremstyle{definition}
\newtheorem{remark}{Remark}
\numberwithin{equation}{section}
\def\bq{\mathbf{q}}
\def\cA{\mathcal{A}}
\def\cM{\mathcal{M}}
\def\cQ{\mathcal{Q}}
\def\cR{\mathcal{R}}
\def\cS{\mathcal{S}}
\def\cU{\mathcal{U}}
\def\hf{\hat{f}}
\def\hh{\hat{h}}
\def\hp{\hat{p}}
\def\hq{\hat{q}}
\def\hx{\hat{x}}
\def \hU{\widehat{U}}
\begin{document}

\title[Homoclinic points for convex billiards]{Homoclinic points for convex billiards}

\author[Z. Xia]{Zhihong Xia}
\address{Department of Mathematics,
Northwestern University,
Evanston, IL 60208.}
\email{xia@math.northwestern.edu}

\author[P. Zhang]{Pengfei Zhang}
\address{Department of Mathematics and Statistics,
University of Massachusetts Amherst,
Amherst, MA 01003.}
\email{pzhang@math.umass.edu}

\subjclass[2000]{37D40, 37D50, 37C20, 37E40}

\keywords{convex billiards, generic properties,  hyerboblic periodic point, 
homoclinic point, heteroclinic connection, elliptic periodic point, Moser stable,
prime ends}

\begin{abstract}
In this paper we investigate some generic properties of a billiard system on a convex table.
We show that generically, 
every hyperbolic periodic point admits some homoclinic orbit.
\end{abstract}

\maketitle

\section{Introduction}

Let $Q\subset \mathbb{R}^2$ be a strictly convex domain,
whose boundary $\partial Q$ is $C^r$ (for some $r\ge 2$) 
and has non-vanishing curvature everywhere. 
There is a induced dynamical system on $Q$, the {\it dynamical billiard}, in which
a particle moves freely along strict segments in $Q$, 
and changes its velocity according to the law of elastic reflection
upon collisions with the boundary $\partial Q$.
Let $M$ be the phase space of the billiard system, which consists of all post-collision statuses
with the boundary $\partial Q$, and  $F:M\to M$ be induced {\it billiard map},
which is a $C^{r-1}$ diffeomorphism on $M$. Since the motion is free inside $Q$,
the dynamics of the billiard systems are determined completely by the shapes 
of the table $Q$. For example, the system on an elliptic table is completely integrable
(see \cite{Bir}).
In this paper we consider a generic convex billiard table, 
and prove the existence of homoclinic orbits on all branches of
all hyperbolic periodic points.

There are many results on the existence of homoclinic or heteroclinic orbits. 
Poincar\'e first appreciated the significance of transversal homoclinic point
(developed later by Birkhoff and Smale).
Robinson proved in \cite{Rob73} that for a $C^r$ map on two-sphere,
if the unstable manifold of a hyperbolic fixed point accumulates on its stable manifold,
then a $C^r$ small perturbation can create a homoclinic point. Pixton \cite{Pix} extended
Robinson's result to periodic orbits, 
and proved the generic existence of homoclinic orbit on $S^2$ in the area-preserving category. 
Using some topological argument,
Oliveira showed in \cite{Oli1} the generic existence of homoclinic orbit on $\mathbb{T}^2$, and 
extended in \cite{Oli2} to any compact surface (for the area-preserving systems 
with irreducible action on the first homology group $H_1(M)$).
Recently in \cite{Xia2}, Xia gave a proof of the the generic existence of homoclinic orbit 
among the class of systems homotopic to identity,
and among the class of Hamiltonian systems (also on general compact surface).

Creation of homoclinic orbits in billiard system turns out to 
be completely different from the above approaches,
since we can't perturb the billiard map directly, but deform the underlining billiard table.
Such types of perturbations are {\it not the  local ones}, and have some unavoidable side effects.
Using a variational approach, Bangert showed in \cite{Ban} that for {\it all} monotone  twist maps, 
for each rotation type $\frac{q}{p}$, the set $\cM_{q/p}$ of global minimizers
are totally ordered, and there exist heteroclinic orbits for all {\it neighboring pairs} of periodic points
(and homoclinic orbits if there is only one periodic point).
Since the billiard map on convex table is a monotone twist map, 
there always exist some heteroclinic orbits.
However, most periodic orbits are not global minimizers in general, and there
may not be any heteroclinic orbits relating them.
It is proved in \cite{DOP2} that for generic billiard table $Q$, if $W^u(p)\cap W^s(q)\neq\emptyset$,
then there will be a transversal intersection. See also \cite{LeTa,Don}.
A priori, the two manifolds may have no intersection at all.
What we show here is that, non-intersection is not the generic case if $q=p$,
and every hyperbolic periodic points does admit some homoclinic orbits.
\begin{thm}
Let $r\ge3$. For a $C^r$-generic convex billiard system $Q$, we have that
\begin{enumerate}
\item all periodic points are either hyperbolic or elliptic \cite{DOP2};

\item for every hyperbolic point $p$, there is a transversal homoclinic point
$x\in W^u(p)\pitchfork W^s(p)$.
\end{enumerate}
\end{thm}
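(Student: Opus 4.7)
The plan is a Baire category argument that reduces the theorem to a single perturbation statement about one hyperbolic periodic orbit. For each $N\ge 1$, let $\mathcal{H}_N$ denote the set of convex tables $Q$ for which every hyperbolic periodic point of the billiard map $F=F_Q$ of period at most $N$ admits a transversal homoclinic point. Intersecting $\bigcap_{N}\mathcal{H}_N$ with the generic set from part (1), supplied by \cite{DOP2}, yields the desired residual set, so it suffices to show that each $\mathcal{H}_N$ is open and dense in the $C^r$ topology on convex tables. Openness is standard: a transversal homoclinic intersection and a hyperbolic periodic orbit both persist under $C^{r-1}$-small perturbations of $F$, and $F_Q$ depends $C^{r-1}$-continuously on $\partial Q$.

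Density is the main content. Fix $Q$ and a hyperbolic periodic point $p$. If $W^u(p)\cap W^s(p)\ne\emptyset$, then \cite{DOP2} already supplies a nearby table with a transversal intersection, so we may assume $W^u(p)\cap W^s(p)=\emptyset$ and seek a $C^r$-nearby $Q'$ for which the continuation of $p$ acquires one. I would argue a further dichotomy on closures. If $\overline{W^u(p)}\cap W^s(p)\neq\emptyset$, a Robinson--Pixton-style lemma, adapted to the billiard category, creates a genuine intersection via a small deformation of $\partial Q$. If instead $\overline{W^u(p)}\cap W^s(p)=\emptyset$, I would appeal to topological dynamics on the phase cylinder $M$: area preservation forces $\overline{W^u(p)}$ and $\overline{W^s(p)}$ to be richly recurrent $F$-invariant closed sets, and Carath\'eodory's prime-ends compactification of a connected component $U$ of the complement of their union endows $\partial U$ with a rotational structure whose periodic (or quasi-periodic) prime-end orbits correspond to points simultaneously accessible from $W^u(p)$ and $W^s(p)$. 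This either yields an accumulation directly or identifies a controlled location at which a small boundary deformation forces one, reducing to the previous case.

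The main obstacle is executing the perturbation step in the billiard category. Unlike a general surface diffeomorphism, $F_Q$ cannot be altered locally in phase space; one may only deform the boundary $\partial Q$, and the resulting change of $F_Q$ is global. My approach is to locate a short arc $\gamma\subset\partial Q$ struck by a specified iterate of a point on $W^u(p)$ that lies very close to $W^s(p)$, chosen disjoint from every reflection point of the periodic orbit $p$. A $C^r$-small bump supported on $\gamma$ then preserves $p$ and its hyperbolic spectrum while shifting the chosen iterate of $W^u(p)$ in a direction transverse to $W^s(p)$, creating the desired crossing. Proving that such an arc always exists and that the nonlocal change of $F_Q$ can be controlled enough to guarantee a genuine intersection, rather than a simultaneous perturbation of both manifolds in unrelated ways, is the core technical difficulty, and is the billiard analogue of Pixton's closing argument.
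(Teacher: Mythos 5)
Your proposal has a genuine gap at its central step. The density argument hinges on a ``Robinson--Pixton-style lemma, adapted to the billiard category'': given $\overline{W^u(p)}\cap W^s(p)\neq\emptyset$, produce a $C^r$-small deformation of $\partial Q$ creating an actual intersection. You correctly identify why this is hard --- a bump on an arc $\gamma\subset\partial Q$ perturbs $F_Q$ nonlocally, so it moves not just one chosen pass of $W^u(p)$ but every orbit segment that ever strikes $\gamma$, including infinitely many returns of both $W^u(p)$ and $W^s(p)$ and possibly the periodic orbit itself --- but you then leave it as ``the core technical difficulty'' without resolving it. That is not a detail to be filled in: it is precisely the obstruction that distinguishes billiards from the surface-diffeomorphism setting of Robinson and Pixton, where the perturbation can be supported in a small disk in phase space met by only one fundamental-domain pass. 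No argument is given that a suitable arc $\gamma$ exists, that the hyperbolic continuation of $p$ and its local invariant manifolds move by a controlled amount relative to the displacement of the targeted pass, or that the cumulative effect of the other passes through $\gamma$ does not cancel the intended crossing. The second branch of your dichotomy ($\overline{W^u(p)}\cap W^s(p)=\emptyset$) is likewise only sketched: the prime-end compactification of a complementary component gives you accessible boundary points, but the step from there to ``a controlled location at which a small boundary deformation forces one'' is again the same unproved perturbation.

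The paper avoids this difficulty entirely: it performs \emph{no} perturbation to create the homoclinic intersection. It first passes to the residual set $\cR$ already available from \cite{DOP2} and \cite{BuGr} (nondegenerate periodic points, Moser-stable elliptic points, and the property that nonempty intersections of invariant manifolds contain a transversal point), and then proves a statement valid for \emph{every} system in $\cR$, with no further deformation: via prime-end extensions, a branch of a hyperbolic periodic point either contains a recurrent point or is a saddle connection (Proposition \ref{recurrent}); saddle connections are excluded by membership in $\cR$, so all four branches are recurrent; Lazutkin invariant circles let one collapse the phase cylinder to a sphere, and an intersection-number argument applied to an adjacent accumulating pair of branches then forces a homoclinic point, which is transversal again by the defining property of $\cR$. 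If you want to salvage your route, you would essentially have to prove the billiard closing lemma you postulate, which is a substantial open-ended task; the soft topological route is what makes the theorem tractable.
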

Note that the properties we listed are open for all finite periods, and $G_\delta$-properties in general. 
So it suffices to prove the theorem in the $C^\infty$-category.
Then $C^r$-genericity  follows directly.

It is worth pointing out that the existence of heteroclinic orbits is not generic,
since two hyperbolic orbits may be separated by some invariant curves.
Specifically, let's consider the elliptic table $x^2+2y^2\le 1$, on which the billiard system
is completely integrable, and  the periodic two orbit along the minor axis is parabolic.
A $C^\infty$-small deformation of $Q$ can turn this orbit into a hyperbolic periodic orbit.
But there is no heteroclinic orbit connecting the periodic orbits on major and minor axes, 
since some of the invariant curves survive and separate the two orbits.

Let's consider a general diffeomorphism $f:M\to M$ on a closed surface $M$, 
$p$ and $q$ be two hyperbolic 
periodic points of $f$.
Recall that $f$ is said to have a {\it homoclinic loop} at $p$,  
if one unstable branch of $p$ coincides with a stable branch of $p$.
Similar, we say that $f$ has a {\it heteroclinic path} from $p$ to $q$,
if one unstable branch of $p$ coincides with a stable branch of $q$.
In general we may say that the common branch forms a {\it path of saddle connection} of $f$.
A main step proving above theorem is
\begin{pro}
Let $f$ be an area-preserving diffeomorphism on a closed surface $M$, such 
 that each periodic point is either hyperbolic, or elliptic and Moser stable.
Then for any branch $L$ of the invariant manifolds of any hyperbolic periodic point $p$, 
one of the following alternative holds:
\begin{enumerate}
\item $L$ contains some recurrent point.
\item $L$ forms a path of saddle connections. 
\end{enumerate}
\end{pro}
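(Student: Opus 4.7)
My plan is to argue by dichotomy: assuming that the branch $L$ carries no recurrent point, I aim to produce a hyperbolic periodic point $q$ on $\overline{L}$ and show that $L$ must coincide with a stable branch of $q$, which is exactly the saddle connection alternative. After replacing $f$ by a suitable iterate, I may assume $p$ is fixed and $L$ is $f$-invariant, parametrized as an injective ray $\gamma\colon[0,\infty)\to M$ with $\gamma(0)=p$ and $f\circ\gamma(t)=\gamma(\lambda t)$ for the unstable eigenvalue $\lambda>1$. Writing $\Omega:=\overline{L}\setminus L$ for the accumulation set, the degenerate case $\Omega\subseteq\{p\}$ means $L\cup\{p\}$ closes up and forces $\gamma(t)$ to tend to $p$ along $W^s(p)$, giving a homoclinic loop. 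Hence I may assume $\Omega\setminus\{p\}$ is a nonempty closed $f$-invariant set.

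The central step is a prime end argument. Since $M$ is compact and $f$ preserves area, every connected component of $M\setminus\overline{L}$ has a finite $f$-orbit, and I fix an $f^n$-invariant component $U$. Carathéodory's prime end compactification realizes $U$ as a closed disk on which $f^n$ acts as a homeomorphism, inducing a circle homeomorphism $\varphi$ on the prime ends with rotation number $\rho\in\mathbb{R}/\mathbb{Z}$. In the rational case, the classical Cartwright--Littlewood/Mather accessibility theorem yields a periodic point $q\in\partial U\subset\overline{L}$. In the irrational case, the area-preserving prime end theorems of Franks and Le~Calvez provide recurrent accessible points on $\partial U$; since points of $L$ close to $U$ are themselves accessible from $U$ along $L$, the recurrence transports onto $L$, contradicting the standing assumption. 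Either way I extract a periodic point $q\in\Omega$.

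By hypothesis $q$ is hyperbolic or Moser-stable elliptic. The Moser-stable case is ruled out as follows: a small Moser neighborhood $V_\epsilon$ of $q$ is an $f^{n'}$-invariant disk bounded by a Jordan curve on which $f^{n'}$ is conjugate to an irrational rotation. Its invariance, combined with $q\in\partial U$ being accessible from both sides of $\overline{L}$, traps a thin $f^{n'}$-invariant annulus on which $L$ must accumulate; Poincaré recurrence on this compact annulus then delivers recurrent points on $L$, contradicting the hypothesis. Hence $q$ is hyperbolic. Because $L$ accumulates on $q$, either $L$ meets $W^s(q)$ transversally---in which case the Smale--Birkhoff horseshoe constructed from a transverse heteroclinic point produces transverse homoclinic points of $p$ on $L$ itself, and these are recurrent, again a contradiction---or the intersection is tangential. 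Since $L$ and the relevant stable branch of $q$ are both injectively immersed one-dimensional invariant curves agreeing on an open subarc, invariance propagates the equality globally, so $L$ is a stable branch of $q$: the saddle connection alternative.

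The step I expect to be the main obstacle is the prime end extraction of $q$, and in particular ruling out the irrational rotation number case under the no-recurrence assumption, since transporting recurrence from accessible boundary points to points on $L$ itself requires a careful geometric argument compatible with the immersed nature of $L$. The exclusion of the Moser-stable case is the other technical heart, resting on a fine analysis of how $\overline{L}$ can meet the KAM curves around $q$. Once $q$ is identified as hyperbolic, the $\lambda$-lemma plus Smale--Birkhoff conclusion is essentially standard.
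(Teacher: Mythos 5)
Your overall architecture (no recurrence $\Rightarrow$ prime-end analysis of a component of $M\setminus\overline{L}$ $\Rightarrow$ a periodic point $q$ on the boundary $\Rightarrow$ $q$ hyperbolic $\Rightarrow$ $L$ is a stable branch of $q$) matches the paper's, and your extraction of a hyperbolic $q$ is essentially Lemma \ref{downfixed}. But the final step contains a genuine gap. From ``$L$ accumulates on $q$'' you pass to the dichotomy ``$L$ meets $W^s(q)$ transversally or tangentially,'' omitting the main case: $L$ may accumulate on $q$ (indeed $\overline{L}$ may contain entire branches of $q$) while $L\cap W^s(q)=\emptyset$. That non-intersecting case is precisely what the proposition is about, and it cannot be assumed away. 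Moreover, even granting an intersection, ``tangential'' does not imply ``agreeing on an open subarc''---isolated heteroclinic tangencies of invariant curves are a standard phenomenon---so the propagation-of-equality argument never starts. Finally, in the transverse case with $q\neq p$, a single transverse point of $W^u(p)\cap W^s(q)$ does not yield a Smale--Birkhoff horseshoe or homoclinic points of $p$; that requires a returning connection $W^u(q)\cap W^s(p)\neq\emptyset$, which you have not produced, and a point of $L\cap W^s(q)$ is itself not recurrent (its omega-limit set is the orbit of $q$).

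What is missing is the content of the paper's Lemma \ref{open}. One first chooses $U$ to be the component of $M\setminus\overline{L}$ adjacent to $L$ in a quadrant at $p$; then the prime end over $p$ observed along $L$ is fixed, the rotation number is forced to be $0$, and your appeal to Franks--Le~Calvez for the irrational case becomes unnecessary. (For an arbitrary invariant component, your step ``recurrence transports onto $L$'' fails, since $\partial U\subset\overline{L}$ need not contain any point of $L$ at all.) One then proves, by a Hartman-coordinate argument, that near the hyperbolic point $q$ the boundary $\partial U$ consists of one or two local invariant branches of $q$. Since these boundary pieces are regular arcs inside $\overline{L}=\{p\}\sqcup L\sqcup\omega(L)$, they must lie in $L$ itself, and invariance then forces $L$ to coincide with a stable branch of $q$. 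This local boundary-structure lemma is the actual mechanism that handles the non-intersection case, and it cannot be replaced by the intersection dichotomy you propose.
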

The existence of a path of saddle connection is highly non-generic.
But the point of above proposition is not about generic systems.
It is for all systems satisfying above condition. 
In particular, for a specific dynamical system, if one can pre-exclude
any path of saddle connections,  then all stable and unstable branches of {\it the given system}
must be recurrent (no further perturbation required). 
Then homoclinic intersection follows if the phase space
$M$ has simple topology.

\section{Preliminary}

In this section we describe the topology on the collection of convex billiard tables,
list some generic properties of the billiard maps on these tables, 
and then introduce the main tool for our study, prime-end extensions. 

\subsection{Topology on the set of tables}
Let $Q\subset \mathbb{R}^2$ be a strictly convex domains,
whose boundary $\partial Q$ is a $C^\infty$ curve with nonvanishing curvature.  
The support function $h$ of $Q$ is given by
$h(\theta)=\sup\{\mathbf{q}\cdot \langle\cos\theta,\sin\theta\rangle:\mathbf{q}\in Q\}$. The collection of all such support functions,
say $\cS^\infty$, form an open subset of $C^\infty(S^1,\mathbb{R})$ 
(with the induced topology on $\cS^\infty$).
We denote by $\cQ^\infty$  the corresponding collection\footnote{Sometime 
we consider the quotient space of $\cQ^\infty$ by rigid motions on $\mathbb{R}^2$
and by homotheties of $Q$, both of which do not change the dynamics at all.
The resulting space is also a Baire space (see  \cite[Prop. 3]{DOP2}), 
and shares the same generic properties.} 
 of convex billiard tables.
A property (P) is said to be true on a $C^\infty$-generic convex billiard system,
if the set of the support functions of the tables satisfying (P) contains a $C^\infty$-residual
subset of $\cS^\infty$.

Let $M=\partial Q\times(-1,1)$ be the phase space of the billiard map on $Q$,
where $\partial Q$ is parametrized by the arc-length parameter $r=r(\bq)$,
$s=\sin\varphi$ and $\varphi$ measures the angle of the outgoing velocity with the inner normal
direction at $\bq\in\partial Q$. The advantage of choosing this coordinate system is that 
the billiard map always preserves the Lebesgue measure $m=drds$.
It is well known that for convex billiard maps,
there is no fixed point in the open annulus $-1<s<1$,
and period-$2$ orbits can only lie in the line $s=0$.

\subsection{Limit sets of invariant manifolds}

Let $M$ be a compact surface, $f:M\to M$ be an area-preserving diffeomorphism on $M$
and $p$ be a hyperbolic periodic point of $f$ with period $n$, 
 $L$ be a branch of the unstable manifold of $p$. Pick a point $x\in L$.
Then $L[x,f^{2n}x]$ forms a fundamental domain of $L$, and its omega-limit set under $f^{2n}$
is denoted by $\omega(L,x,f^{2n})$. Clearly it is independent the choice of $x\in L$
and remains the same if we use $f^{2nl}$. So we may denote it by $\omega(L)$ for short.
Similarly we define the alpha-limit set $\alpha(L)$ of a stable branch $L$
of $p$ with respect to $f^{-2n}$.
Sometime it is more convenient to use a unified notation. That is,
we may use $\omega(L)$ to denote the {\it alpha-limit set} of $L$, if $L$ is a stable branch.
\begin{pro}[\cite{Oli1}]
Let $L,K$ be two branches of some hyperbolic periodic points of $f:M\to M$. Then 
\begin{itemize}
\item either $L\subset \omega(L)$, or $L\cap\omega(L)=\emptyset$.

\item either $K\subset \omega(L)$, or $K\cap\omega(L)=\emptyset$.
 \end{itemize}  
\end{pro}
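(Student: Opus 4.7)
My strategy for both statements is a connectedness argument, along the lines of Oliveira's approach. I would show that $\omega(L)\cap L$ (respectively $\omega(L)\cap K$) is both relatively closed and relatively open in $L$ (respectively $K$). Since each branch is the injective image of an interval, hence connected, the intersection must be empty or the whole branch.

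Relative closedness is immediate, since $\omega(L)$ is closed in $M$ (being the $\omega$-limit of the compact fundamental domain $L[x,f^{2n}x]$ under the homeomorphism $f^{2n}$). For relative openness I would reduce to the base periodic point. In the first statement, take $y\in L\cap\omega(L)$. Since $\omega(L)$ is $f^{2n}$-invariant, the whole $f^{2n}$-orbit of $y$ lies in $\omega(L)$; as this orbit converges to $p$ under backward iteration along $L$, closedness yields $p\in\omega(L)$. I would then apply the $\lambda$-lemma at $p$: because $p\in\omega(L)$, arbitrarily late pieces $f^{2nm}(L[x,f^{2n}x])$ approach $p$, and under further forward $f^{2n}$-iterates their tangent directions tilt toward $E^u(p)$ by the hyperbolic dominated splitting, placing a local arc $L_{\mathrm{loc}}$ of $L$ at $p$ inside $\omega(L)$. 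Since $f^{2n}$ is expanding along $L$ near $p$, the forward images $\bigcup_{j\geq 0}f^{2nj}(L_{\mathrm{loc}})$ exhaust $L$; by invariance of $\omega(L)$ we conclude $L\subset\omega(L)$.

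For the second statement, let $q$ denote the base periodic point of $K$ with period $m$, and set $T:=f^{2nm}$, so that both $\omega(L)$ and $K$ are $T$-invariant. Suppose $y\in K\cap\omega(L)$. The $T$-orbit of $y$ lies in $\omega(L)\cap K$ and accumulates on $q$ (backwards if $K$ is unstable, forwards if $K$ is stable), giving $q\in\omega(L)$. Repeating the $\lambda$-lemma step at $q$ --- applied to $T$ if $K$ is unstable and to $T^{-1}$ if $K$ is stable, both of which preserve the closed set $\omega(L)$ --- deposits a local piece of $K$ near $q$ into $\omega(L)$, and $T$-invariance combined with expansion along $K$ spreads it over all of $K$.

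\textbf{Main obstacle.} The delicate input is the $\lambda$-lemma, whose classical formulation demands a transversal to $W^s$ that actually meets $W^s$, while a priori no accumulating piece of $L$ need cross $W^s(p)$ (or $W^s(q)$, $W^u(q)$). I would replace transversality by a tangent-line estimate: a piece of $L$ lying in a small neighborhood of $p$ whose tangent is not exactly parallel to $E^s(p)$ is rotated toward $E^u(p)$ by successive applications of $Df^{2n}$, so after finitely many iterates a tail of the piece becomes a $C^1$ graph over $W^u_{\mathrm{loc}}(p)$ and $C^1$-converges to it. Closedness and $f^{2n}$-invariance of $\omega(L)$ then force $L_{\mathrm{loc}}\subset\omega(L)$. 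The analogous estimate at $q$ for $T^{\pm 1}$ handles the unstable and stable cases of $K$ symmetrically.
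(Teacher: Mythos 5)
Your overall strategy (connectedness of the branch plus a relative open--closed dichotomy, with an inclination-lemma argument supplying openness) is in the spirit of Oliveira's original proof, but it is not the route this paper takes, and as written it has a genuine gap at its central step. The claim that ``arbitrarily late pieces $f^{2nm}(L[x,f^{2n}x])$ approach $p$, and under further forward $f^{2n}$-iterates their tangent directions tilt toward $E^u(p)$\,\dots\,placing a local arc $L_{\mathrm{loc}}$ of $L$ inside $\omega(L)$'' is not justified by knowing only $p\in\omega(L)$. An arc that enters a Hartman box near $p$ but stays a definite distance from $W^s_{\mathrm{loc}}(p)$ is simply expelled from the box after finitely many iterates; it does not $C^1$-converge to $W^u_{\mathrm{loc}}(p)$, so your ``tangent-line estimate'' replacement for transversality fails exactly in the case you need it (arcs accumulating on $p$ without crossing $W^s_{\mathrm{loc}}(p)$). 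If instead you iterate individual accumulation points, you only deposit into $\omega(L)$ a geometric sequence $\{(\lambda^{m}x_0,0)\}$ on the local branch --- a $\lambda$-dense but proper closed subset --- not the whole arc $L_{\mathrm{loc}}$; filling in requires controlling entire arcs of the tail of $L$, which your sketch does not do. You also never fix which of the two local unstable (resp.\ stable) branches the limiting arcs approach, so even a corrected inclination argument could deposit $L^-_{\mathrm{loc}}$ (or $K^-_{\mathrm{loc}}$) rather than the branch you want. Finally, your argument never uses that $f$ preserves area, which should be a warning sign: every proof of this dichotomy cited in the paper (Oliveira, Mather, Franks--Le Calvez) uses it.

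The paper's own proof is entirely different and avoids all of this. It proves the more general Lemma \ref{branch}: for \emph{any} compact, connected, invariant set $K$ and any branch $\gamma$, either $\gamma\subset K$ or $\gamma\cap K=\emptyset$. The proof is purely measure-theoretic and topological: if $\gamma$ met both $K$ and $M\setminus K$, one takes an arc $\alpha\subset\gamma$ with endpoints on $K$, uses the iterates $f^i\alpha$ ($0\le i\le 2g$) together with $K$ to cut off a component $V$, and derives $m(f^kV)>m(V)$ from Poincar\'e recurrence and the fact that far-forward images of $\alpha$ are too short to re-enter a fixed ball --- contradicting area preservation. The Proposition then follows by taking $K=\omega(L)$, which is compact, connected (nested intersection of closures of connected tails of $L$) and invariant; no hyperbolicity estimates, no $\lambda$-lemma, and no case analysis on stable versus unstable branches or on which periodic point carries $K$. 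If you want to salvage your approach you would essentially have to reprove a non-transversal inclination lemma with area preservation as input; the paper's argument shows this is unnecessary.
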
  
We will provide a slightly more general statement in Lemma \ref{branch}
with a much shorter proof.

\subsection{Some known properties}\label{Un}

Let $n\ge 1$ and $\cU_n$ be the set
of convex tables in $\cQ^\infty$ such that for every $Q\in \cU_n$,
\begin{enumerate}
\item[(a)] $\mathrm{Per}_n(Q)=\{x\in M: F^nx=x\}$ is finite,
and all of them are either hyperbolic or elliptic;

\item[(b)] given two hyperbolic periodic points $p,q\in\mathrm{Per}_n(Q)$, 
$W^u(p)$ and $W^s(q)$ either don't intersect, or have some transversal intersections;

\item[(c)]  every elliptic periodic point in $\mathrm{Per}_n(Q)$ is Moser stable. 
\end{enumerate} 
Recall that an elliptic fixed point $p$ is said to be {\it Moser stable}, 
if there is a fundamental system of neighborhoods which are invariant closed disks $D$
surrounding the point, such that $F|_{\partial D}$ is a minimal rotation. 
So Moser stable periodic points are isolated from the dynamics
and are unreachable from the outside invariant rays.

\begin{remark}
The above Property (b) is chosen carefully when defining $\mathcal{U}_n$, 
since there might be some tangent intersections of two invariant manifolds
besides the transversal ones (even in the generic case, as pointed out in \cite{DOP2}).
This is different from the classical {\it Kupka-Smale condition}
of generic diffeomorphisms, which requires that all intersections of two invariant manifolds
are transversal.
This might be a technique difficulty, since only special perturbations can be performed:
deformations of the underlining table.
\end{remark}

\begin{pro}\label{finite}
For every $n\ge1$, the set $\cU_n$ is $C^2$-open and $C^\infty$-dense in $\cQ^\infty$.
\end{pro}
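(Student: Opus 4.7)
The plan is to split the proposition into its two assertions, $C^2$-openness and $C^\infty$-density, and within each part treat conditions (a), (b), (c) separately.

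\textbf{Openness in $C^2$.} For property (a), any periodic point of period $\le n$ that is hyperbolic or elliptic (in the strict sense, with $\pm 1$ excluded from the spectrum of $DF^n$) is non-degenerate, so persists and retains its spectral type under $C^1$-small perturbations of $F$. Since $Q\mapsto F_Q$ is continuous from $\cS^\infty$ with the $C^2$ topology into $C^1(M,M)$, and non-degenerate period-$n$ points are isolated, $\mathrm{Per}_n$ varies continuously and (a) is $C^2$-open. Property (b) is $C^1$-open because transversality at the finitely many intersection points in a compact fundamental domain of $W^u(p)$ is preserved, as is the alternative of no intersection (only finitely many iterates of the fundamental domain need be checked). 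Property (c) is $C^2$-open by the persistence of KAM curves under small twist map perturbations, which furnishes a fundamental system of invariant closed curves about each elliptic point for every nearby table.

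\textbf{Density in $C^\infty$.} By the results of \cite{DOP2} quoted in the introduction, conditions (a) and (b) hold on a $C^\infty$-dense subset $\cU'_n\subset\cQ^\infty$. Given $Q\in\cU'_n$, property (a) provides only finitely many elliptic periodic points $p_1,\ldots,p_k$ of period $\le n$. For each $p_i$ in turn, I would apply the Birkhoff normal form to $F_Q^n$ near $p_i$ and seek a $C^\infty$-small perturbation of the support function $h$ which leaves (a) and (b) intact (invoking the openness just established) and which makes the first non-vanishing Birkhoff twist coefficient at $p_i$ nonzero with Diophantine rotation number. Moser's twist theorem (with R\"ussmann non-degeneracy sufficing) then yields a nested family of invariant curves around $p_i$ with $F_Q^n$ acting as minimal rotation on each, i.e.\ Moser stability. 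Iterating over $i=1,\ldots,k$ and choosing the perturbations small enough at each stage produces a table in $\cU_n$ arbitrarily close to $Q$.

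\textbf{Main obstacle.} The hard step is constructing the support-function perturbation that realizes twist non-degeneracy at $p_i$, since boundary perturbations act non-locally on the phase space $M$. The natural strategy is to use bump perturbations of $h$ supported in small arcs of $S^1$ centered at the outer-normal directions of the bounce points of the orbit of $p_i$, and to show that the map from this finite-dimensional parameter family to the relevant finite jet of the return map at $p_i$ is a submersion. This infinitesimal-transitivity statement is the billiard-specific content of the argument. Care must be taken to verify that the perturbation is small enough in $C^\infty$ that it does not destroy the hyperbolicity/ellipticity of other periodic points in $\mathrm{Per}_n$, nor any transversal intersections in (b); both are guaranteed by the $C^2$-openness of (a) and (b). Assembling these ingredients finishes the density statement and completes the proof.
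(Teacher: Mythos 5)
Your outline follows essentially the same route as the paper, which does not prove this proposition from scratch but records it as the combination of Propositions 6--9 in \cite{DOP2} and Theorems 3--4 in \cite{BuGr}, together with three observations that your sketch largely reproduces: normal perturbations of the table that change the curvature at a bounce point while keeping the orbit fixed, and the Birkhoff normal form plus Moser twist theorem route to Moser stability. The ``main obstacle'' you isolate (that boundary deformations act non-locally, and one must show a finite-dimensional family of bump perturbations of $h$ submerses onto the relevant jet of the return map) is precisely the content supplied by those references. There is, however, one concrete gap in your openness argument for (a): the phase space $M=\partial Q\times(-1,1)$ is not compact, so the fact that non-degenerate period-$n$ points are isolated does not by itself yield finiteness of $\mathrm{Per}_n(Q)$, nor a uniform persistence statement --- a priori, periodic points could accumulate on the boundary $s=\pm1$. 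The paper's second observation closes exactly this gap: every $k$-periodic orbit is an inscribed $k$-gon in $Q$, so at least one of its points lies in the compact annulus $\cA_k=\{(r,s):|s|\le \sin(1/2-1/k)\pi\}$, and finiteness then follows from non-degeneracy on a compact set. You need this, or an equivalent confinement argument, before ``isolated'' can be upgraded to ``finite.''

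A second, smaller issue: your claim that (b) is $C^1$-open ``because only finitely many iterates of the fundamental domain need be checked'' does not hold as stated. The alternative ``some transversal intersection exists'' is indeed open (a single transversal crossing persists), but the alternative ``no intersection at all'' concerns the full non-compact invariant manifolds, and a small perturbation can create a first, purely tangential intersection arbitrarily far along them, which would leave $\cU_n$. The paper gives no argument here beyond the citation, and its own remark emphasizes that tangencies cannot be excluded even generically; so a self-contained treatment would require either adopting the precise formulation of the open condition from \cite{DOP2} or restricting the non-intersection condition to suitable compact pieces.
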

This proposition is the combination of Propositions 6--9 in \cite{DOP2} and Theorems 3--4 in \cite{BuGr}.
In fact they made several interesting observations:
\begin{enumerate}
\item to perturb a degenerate periodic orbit,  a finite induction process is employed 
to find a local deformation, {\it the normal perturbations} of $Q$, 
that changes only the curvature at some base point
of the periodic orbit, while keeping the orbit unchanged. 

\item every $k$-periodic orbit $\{F^ix:0\le i<k\}$ of $F$ gives an inscribed $k$-polygon in $Q$. 
So at least one of the $F^ix$ lies in the compact annulus 
$\cA_k=\{(r,s):|s|\le \sin(1/2-1/k)\pi\}$ (recall $s=\sin\varphi$ in our notation).
Therefore the finiteness follows from non-degeneracy of periodic orbits. 

\item to make an elliptic periodic orbit Moser stable, 
it suffices to make a $C^\infty$-small normal deformation maintaining a third order contact, such
that the first Birkhoff coefficient is nonzero under the Birkhoff normal form.
Then Moser twist mapping theorem ensures that the orbit is Moser stable. 
\end{enumerate}
See also \cite{LeTa,Don,DOP1} for earlier versions of such deformations.

Another classical result we may use is
\vskip.2cm

\noindent{\bf Lazutkin Theorem.} \cite{Laz}
{\it  For $r$ large enough, every strictly convex domain $Q$ with $C^r$ boundary 
and nowhere vanishing curvature admits 
a family of invariant curves converging to the boundary $\partial M=\{s=\pm 1\}$.
Moreover, these invariant curves have irrational rotation numbers and occupy a positive 
volume in a neighborhood of $\partial M$.
 }

Originally this theorem asked for $r\ge 553$ to apply KAM theory; 
later this was reduced to $r=6$, see \cite{Dou}. 
\begin{remark}
Clearly the existence of invariant curves prevents the billiard system to be ergodic.
So all convex billiards are not ergodic if the boundary is $C^6$ and with nowhere vanishing curvature. 
On the other hand, it is showed that no such invariant curve can survive if one of the restrictions is relaxed,
see Mather \cite{Mat} and Hubacher \cite{Hub}.
Currently, most known ergodic convex billiard tables have long flat components to
apply defocusing mechanism (see \cite{Bu,Woj,BuGr} and the references therein).  
\end{remark}

\subsection{Prime-end compactification and extension}

Let $\mathbb{D}\subset\mathbb{R}^2=\mathbb{C}$ be the open unit disk,
$U$ be a bounded, simply connected domain on the plane.
Then there exists a conformal map $h:\mathbb{D}\to U$ (see \cite{Mat1}). 
Every point $\hx\in S^1=\partial \mathbb{D}$ is specified by a nested sequence of open arcs
$\gamma_n\subset U$ with $|\gamma_n|<1/n$ such that the endpoints of $h^{-1}\gamma_n$
lie on each side of $\hx\in S^1$, $h^{-1}\gamma_n$ are nested and converge to $\hx$ 
(see \cite[Theorem 17.11]{Mil}).
The prime-end compactification $\hU\triangleq U\sqcup S^1$ gives one way to compactify $U$,
whose topology is uniquely determined by 
the extended homeomorphism  $\hh:\overline{\mathbb{D}}\to\hU$,
such that $\hh|_{\mathbb{D}}=h$ and $\hh|_{S^1}=\text{Id}$ (see \cite[Theorem 17.12]{Mil}).
Here we point out that:

\begin{enumerate}
\item[(I.1)] A point $\hx\in S^1$ may cover many points in the boundary $\partial U$, 
which is compact and connected, and called the {\it impression} of $\hx$ 
\cite[Theorem 17.7]{Mil}.

\item[(I.2)] A point $x\in \partial U$ may be lifted to many points in the prime-end $S^1$,
because we may observe it several times when traveling inside $U$ (\cite[Figure 37-(b)]{Mil}).
\end{enumerate}
See also \cite[\S 7]{Mat1} for various examples.

Let $f:U\to U$ be a homeomorphism. Then there exists uniquely an 
extension $\hf:\hU\to \hU$. If $f$ is orientation-preserving,
then the restriction $\hf|_{S^1}$ is an orientation-preserving circle homeomorphism.
Therefore the rotation number $\rho$ of $\hf|_{S^1}$ is well defined, which is 
called the {\it Carath\'eodory rotation number} associated to $(f,U)$, see \cite{Mat2}. 
It is well known that $\hf|_{S^1}$ has periodic orbits if and only if it has a rational rotation number.
However, we note that
\begin{enumerate}
\item[(II.1)] A fixed point of $\hf$ on $S^1$ doesn't imply the existence of a fixed point
of $f$ on $\partial U$. For example, let 
$U$ be the unstable set of the new formed source of a DA-map on $\mathbb{T}^2$,
which is an open set bounded by two (split) stable manifolds. Then we can construct $f$ 
on $U$ which are translations along the boundary. So no point on $\partial U$ is fixed by $f$. 
But the two added points in the prime-end are fixed by $\hf$.

\item[(II.2)] A fixed point of $f$ on $\partial U$ doesn't imply the existence of a fixed point
of $\hf$ on $S^1$. For example, we can put an aster set $E$ on the north pole $N$,
and let $f$ rotate $S^2$
along the axis by $\pi/3$. Although $N$ is fixed, the prime-end extension on
$U=S^2\backslash E$ has rotation number $1/6$ on the added boundary.
\end{enumerate}

Prime-end extensions can also be defined for connected closed surface $M$
(possibly with boundary).
Let $U\subset M$ be an open connected subset,
whose boundary consists of a finite number of connected pieces,
each of which has more than one point. Then we can add to $U$ a finite number of circles,
to get its prime-end compactification $\hU$. See also \cite{Mat1,Xia1}.
\begin{lem}\label{downfixed}
Let $f:M\to M$ be an area-preserving diffeomorphism,  
$U$ be an open connected subset and $\hf:\hU\to \hU$ be a prime-end extension.
\begin{enumerate}
\item If there exists a fixed point of $\hf$ on the added boundary of $\hU$,
then there exists a fixed point of $f$ on $\partial U$.

\item Moreover assume that each fixed point of $f$ is either hyperbolic, or elliptic and Moser stable.
Then the fixed point is hyperbolic.
\end{enumerate}
\end{lem}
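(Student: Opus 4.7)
For part~(1), let $\hat x\in S^1$ be the fixed prime end and let $K=K(\hat x)\subset\partial U$ be its impression. By property (I.1) recalled in the excerpt, $K$ is a nonempty compact connected subset of $\partial U$; since $\hat f$ continuously extends $f$ and fixes $\hat x$, $K$ is moreover $f$-invariant. My plan is to produce a fixed point of $f$ inside $K$ via a fixed-point theorem for area-preserving homeomorphisms with an invariant continuum. Concretely, one can either invoke the Cartwright--Littlewood theorem (combined with its Bell--Akis extension if $K$ is separating) on a chart lift of $K$, or argue in an ad hoc way: take a nested sequence of crosscuts $\gamma_n\subset U$ shrinking to $\hat x$; since $\hat f(\hat x)=\hat x$, the arcs $f(\gamma_n)$ also shrink to $\hat x$, so for each large $n$ the continuum $K$ is trapped inside a topological disk $V_n\subset \overline U$ whose closure is mapped by $f$ into a slightly larger such disk. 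A standard Brouwer argument using area preservation then yields fixed points of $f$ converging to a fixed point in $K\subset\partial U$.

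For part~(2), suppose for contradiction that the fixed point $p\in K$ produced in part~(1) is elliptic and Moser stable. Fix a fundamental system $\{D_n\}$ of $f$-invariant closed disks around $p$ on whose boundaries $f$ acts as a minimal rotation, necessarily with irrational rotation number. Since $f|_{\partial D_n}$ is minimal and $U$ is open and $f$-invariant, $U\cap\partial D_n$ is either empty or all of $\partial D_n$. If $U\cap\partial D_n=\emptyset$, then the Jordan curve $\partial D_n$ lies in $M\setminus U$; connectedness of $U$ together with the fact that $U$ is not contained in the shrinking disk $D_n$ would force $U\cap D_n^\circ=\emptyset$, contradicting $p\in\partial U$. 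Hence $\partial D_n\subset U$ for every small $n$. Consequently the connected component of $\partial U$ containing $p$ is trapped in $\bigcap_n D_n^\circ=\{p\}$ and equals $\{p\}$, so the corresponding added boundary of $\hat U$ is a single prime-end circle. On that circle the Carath\'eodory rotation number of $\hat f$ must agree with the irrational rotation number of $f$ on the nearby invariant curves $\partial D_n\subset U$; but the existence of the fixed point $\hat x$ forces rotation number zero, a contradiction. Therefore $p$ must be hyperbolic.

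The main technical obstacle is the rotation-number identification at the end of part~(2): one must rigorously equate the Carath\'eodory rotation number of $\hat f$ on the prime-end circle corresponding to the isolated boundary component $\{p\}$ with the irrational rotation number of $f|_{\partial D_n}$. Intuitively the curves $\partial D_n$ form a collar of this prime-end circle and $f$ acts on each of them as a rigid rotation, but making this precise requires care in setting up the prime-end extension around an isolated boundary point and in transferring dynamics from an invariant Jordan curve deep inside $U$ to the added circle. A secondary, milder issue is the choice of fixed-point theorem in part~(1); the crosscut/Brouwer construction sketched above is preferred because it exploits area preservation directly and avoids topological hypotheses on $K$.
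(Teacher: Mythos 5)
Your part (1) is where the main problem lies. Neither of your two routes is actually carried out, and both are heavier than what is needed: Cartwright--Littlewood applies to non-separating plane continua (the impression $K$ may separate, and $M$ is a general surface, so you would need the Bell--Kuperberg extensions plus a chart containing all of $K$), while the ``crosscut/Brouwer'' alternative is only a sketch --- $f$ does not map the crosscut neighborhood $V_n$ into itself, so Brouwer does not apply as stated, and you give no reason why the fixed points you hope to produce would accumulate on $K\subset\partial U$ rather than elsewhere. The observation you are missing is that no fixed-point theorem is needed at all: the defining crosscuts can be chosen with $|\gamma_n|\le 1/n$, and since $\hf\hp=\hp$ the crosscuts $f\gamma_n$ define the same prime end; area preservation forces the two equal-area regions cut off by $\gamma_n$ and $f\gamma_n$ to overlap, hence $f\gamma_n\cap\gamma_n\neq\emptyset$. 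Taking $x_n\in f\gamma_n\cap\gamma_n$ and an accumulation point $p$ of the $\gamma_n$, both $x_n$ and $f^{-1}x_n$ lie on $\gamma_n$ and so converge to $p$, giving $f(p)=p$ directly on $\partial U$. This is the paper's entire argument for (1).

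For part (2), your first half (either $\partial D\cap U=\emptyset$, which blocks the crosscuts from reaching $p$, or $\partial D\subset U$ for every small $D$) coincides with the paper. But your way of closing the second case --- equating the Carath\'eodory rotation number on the prime-end circle over the component $\{p\}$ with the irrational rotation number of $f|_{\partial D_n}$ --- is precisely the step you admit you cannot justify, and it is also unnecessary: once the component of $M\setminus U$ containing $p$ is trapped in $\bigcap_n D_n^\circ=\{p\}$, you have already contradicted the standing hypothesis under which the prime-end compactification was set up (each boundary component of $U$ has more than one point), equivalently the existence of genuine crosscuts with endpoints on $\partial U$ shrinking to $p$. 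Moreover the prime-end ``circle'' over an isolated puncture is degenerate, so the rotation-number comparison is not even well posed in this framework. As written, the proposal therefore has genuine gaps in both parts, even though the skeleton of part (2) is on the right track.
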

\begin{proof}
(1). Let $\{\gamma_n\}$ be a collection of open arcs with $|\gamma_n|\le 1/n$ 
and both endpoints on $\partial U$, that define the point
$\hp$. Note that $\{f\circ\gamma_n\}$ also defines $\hp=\hf\hp$.
Let $p\in \partial U$ be an accumulating point of $\gamma_n$.
Since $f$ is area-preserving, we see that the fundamental domains separated by
$\gamma_n$ and $f\gamma_n$ have the same area and intersect each other.
So $f\gamma_n\cap\gamma_n\neq\emptyset$.
Therefore, $p$ must be fixed by $f$ (since $|\gamma_n|\le 1/n$).
 
(2). If $p$ is elliptic, then it is also Moser stable (by our assumption).
So there are a fundamental system of closed disks $D$ surrounding $p$ such  that $f|_{\partial D}$
is minimal. For any such a disk $D$, if $\partial D\cap U\neq\emptyset$, then
$\partial D\subset U$ (by the invariance of $U$). So either $\partial D\cap U=\emptyset$
for some $D$, which implies that $\gamma_n$ are stopped outside $D$ and can't accumulate $p$ 
(a contradiction); or  $\partial D\subset U$ for all such $D$'s.
Since $\partial U$ is assumed to have only finite number of connected pieces, 
there exists some disk $D$ with  $D\subset U$, contradicting the choices of $\gamma_n$. 
So $p$ must be hyperbolic.
\end{proof}

\section{Recurrence and Saddle connections}

In this section we will study the behavior of the branches of hyperbolic periodic points
of a surface diffeomorphism. 
More precisely, let $M$ be a closed connected surface of genus $g\ge0$,
$f: M\to M$ be an area-preserving diffeomorphism on $M$.
\begin{lem}\label{branch}
Let $K\subset M$ be a compact, connected, $f$-invariant subset, $\gamma$ be a branch
of the invariant manifolds of a hyperbolic fixed point $p$. Then either $\gamma\subset K$,
or $\gamma \cap K=\emptyset$.
\end{lem}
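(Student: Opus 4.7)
The plan is to assume $\gamma\cap K\neq\emptyset$ and derive $\gamma\subset K$. First I would pick $x\in\gamma\cap K$ and observe that $f^{-n}(x)\in K$ for every $n\ge 0$ by the $f$-invariance of $K$, and $f^{-n}(x)\to p$ along the unstable branch $\gamma$; since $K$ is closed, this forces $p\in K$.

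Next I would parametrize $\gamma\cup\{p\}$ by a smooth immersion $\phi\colon[0,\infty)\to M$ with $\phi(0)=p$. Under $\phi$ the action of $f$ on $\gamma\cup\{p\}$ conjugates (after replacing $f$ by $f^2$ if $f$ swaps the two branches at $p$) to an increasing homeomorphism $F$ of $[0,\infty)$ whose unique fixed point is $0$ and which satisfies $F(t)>t$ for all $t>0$. The set $A=\phi^{-1}(K)\subset[0,\infty)$ is then closed, $F$-invariant, and contains $0$. It will suffice to exhibit $\epsilon>0$ with $[0,\epsilon]\subset A$: indeed, $F$-invariance then yields $[0,F^n(\epsilon)]\subset A$ for every $n$, and since $F^n(\epsilon)\to\infty$ this forces $A=[0,\infty)$, i.e., $\gamma\subset K$.

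Two easy preliminaries prepare the ground. First, $A$ must be unbounded above — otherwise $T=\sup A$ would lie in $A$ (by closedness) with $T>0$ (since $A\cap(0,\infty)\neq\emptyset$), yielding $F(T)>T$ and $F(T)\in A$, contradicting the definition of $T$. Second, $A$ cannot admit an initial complementary gap $(0,\epsilon_0)$ in $[0,\infty)\setminus A$: else $F$-invariance would propagate this gap to $(0,F^n(\epsilon_0))$ for every $n$ and thus to $(0,\infty)$, forcing $A\subset\{0\}$ and contradicting $\gamma\cap K\neq\emptyset$.

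The hard part will be to rule out the remaining scenario, in which $0$ is an accumulation point of $A$ yet no $[0,\epsilon]$ is contained in $A$. In that case one extracts arbitrarily small bounded gaps of $A$ accumulating at $0$, corresponding to open arcs $V_n\subset\gamma\setminus K$ shrinking to $\{p\}$ in $M$, each with both endpoints in $K$. Each $V_n$ lies in an open connected component of $M\setminus K$, and the task is to exploit the connectedness of $K$ in $M$ to contradict this configuration. My approach would be to fix a component $U$ of $M\setminus K$ containing infinitely many of the arcs $V_n$ (passing to a power $f^m$ with $f^m(U)=U$ if necessary), consider its prime-end compactification $\hU$ together with the extended map $\hf^m\colon\hU\to\hU$ from Section~2.4, and exploit the fact that $p$ is accessible from $U$ through the vanishing arcs $V_n$. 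Combining this accessibility with the local hyperbolic structure of $f$ at $p$ and the connectedness of $K$ should produce the desired contradiction, though the topological details of this prime-end step are where I expect the substantive work to lie.
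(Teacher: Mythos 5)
Your reduction is fine as far as it goes: ruling out an initial gap and a terminal gap of $A=\phi^{-1}(K)$, and concluding that if $\gamma\not\subset K$ there must exist a bounded complementary interval $(a,b)$ with $a,b\in A$, $a>0$ --- equivalently, an open sub-arc $\alpha\subset\gamma\setminus K$ with both endpoints on $K$ --- is exactly the preliminary normalization the paper also performs (its remark that ``$p$ can't be the endpoint of $\alpha$'' is your second preliminary). But everything after that is a genuine gap, not a proof. The entire content of the lemma is to show that such an arc $\alpha$ cannot exist, and your proposal only says that accessibility of $p$ through shrinking gaps, ``combined with the local hyperbolic structure and the connectedness of $K$, should produce the desired contradiction.'' No contradiction is actually derived. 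Note also that up to that point you have nowhere used the hypothesis that $f$ is area-preserving, which is essential (the statement fails for general diffeomorphisms), so the missing step is precisely where all the work lies. There is a further structural worry: the prime-end machinery you gesture at is what the paper uses in Lemma \ref{open}, whose proof itself invokes Lemma \ref{branch}; an argument along your lines would have to be carried out from scratch to avoid circularity. (This route can be made to work --- it is essentially Mather's original ideal-boundary proof, which the paper cites --- but as written you have not done it.)

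For comparison, the paper's proof needs only one such arc $\alpha$, not a sequence of gaps accumulating at $p$, and it avoids prime ends entirely. Taking $\gamma$ stable (so that forward iterates of $\alpha$ shrink), it considers $K$ together with the $2g+1$ iterates $f^i\alpha$, $0\le i\le 2g$, which cut $M$ into components; fixing a component $V$ abutting $\alpha$, a point $x\in\alpha$ with $B(x,2\delta)\cap K=\emptyset$, and an $\epsilon$-core $V_\epsilon^\ast$ of $V$ capturing all but $\delta^2$ of its area, it applies Poincar\'e recurrence to find $k$ with $f^kU_x\cap U_x\neq\emptyset$ and $|f^{k+i}\alpha|<\epsilon$. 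Since the only boundary pieces of $f^kV$ that can enter $V$ are these short arcs pinned to $K$, they can invade neither $B(x,\delta)$ nor $V_\epsilon^\ast$, forcing $f^kV\supset B(x,\delta)\cup V_\epsilon^\ast$ and hence $m(f^kV)>m(V)$, contradicting area preservation. You would need to either carry out this kind of quantitative area argument or fully execute the prime-end argument; as it stands the proposal establishes only the easy reductions.
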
 
This was proved by Mather in \cite{Mat2} using the ideal boundary closure. 
A different proof was given in \cite{FrLC}
for the case $M=S^2$. 
For completeness we give a proof without using the ideal boundary points.
\begin{proof}
Without loss of generality we assume $\gamma$ is a stable branch fixed by $f$.
Suppose on the contrary that $\gamma$ intersects both $K$ and $U=M\backslash K$.
Let $\alpha$ be an open arc in $\gamma$ with both endpoints on $K$.
Clearly $p$ can't be the endpoint of $\alpha$ (otherwise $\gamma\subset U$ since $U$
is invariant). Let $g\ge0$ be the genus of $M$,
and consider the iterates $f^i\alpha$, $i=0,\cdots ,2g$, all of which have their endpoints on $K$.
Together with $K$, they will separate $M$ into (at least two) different connected components.
Pick any one of components, say $V$, that touches $\alpha$ from one side. 

Fix a point $x\in \alpha$ and pick a small $\delta>0$ such that $B(x,2\delta)\cap K=\emptyset$. 
Let $U_x=B(x,\delta)\cap V$ (close to a half disk if $\delta$ is small enough). 
Let $\epsilon<\delta/3$, and $V_\epsilon:=V\backslash B(\partial V,\epsilon)$ 
be the $\epsilon$-interior of $V$,
which may consist of several connected components. Pick a point $y\in V_\epsilon$, and let
$V_\epsilon^\ast$ be the connected component of $V_\epsilon$ containing $y$.
It is easy to see that $\bigcup_{\epsilon>0}V_\epsilon^\ast$ is closed in $V$, 
hence coincides with $V$ (since $V$ is connected). 
Pick $\epsilon$ small enough such that $m(V_\epsilon^\ast)\ge m(V)-\delta^2$.
In particular, it forces $U_x\cap V_\epsilon^\ast\neq\emptyset$.

Pick $N$ large enough such that $|f^n\alpha|<\epsilon$ for all $n\ge N$.
By Poincar\'e Recurrence Theorem,
there exists $k\ge N$ such that $f^kU_x\cap U_x\neq\emptyset$.
Note that $f^{k+i}\alpha$, $0\le i\le 2g$ are the only pieces of the boundary of $f^kV$ that can enter $V$.
With both endpoints hanging on $K$, all of them are too short 
(less than $\epsilon$) to invade the ball $B(x,\delta)$ (since $B(x,2\delta)\cap K=\emptyset$).
So $B(x,\delta)\subset f^kV$.
Similarly,  we have $V_\epsilon^\ast\subset f^{k}V$.
This implies that $m(f^kV)> m(V_\epsilon^\ast)+\pi\delta^2/3>m(V)$, 
which contradicts the area-preserving assumption.
This completes the proof.
\end{proof}

By Hartman's theorem, for any hyperbolic fixed point $p$ of $f$,
there exist a $C^0$ local chart $W$ around $p$ and some $\lambda>1$,
such that the $x$-axis gives the local unstable manifold,
the $y$-axis gives the local stable manifold, 
and $f(x,y)=(\lambda x,\lambda^{-1}y)$.
Next lemma can be proved following the ideas of the proof of \cite[Lemma 6.3]{FrLC}
(see also \cite{Mat1}).
Suppose that each fixed point of $f$ is either hyperbolic, or elliptic and Moser stable.
Let $K\subset M$ be a compact, connected, $f$-invariant subset,
$U$ be an invariant, connected component of $M\backslash K$.
\begin{lem}\label{open}
If there is some fixed point of $\hf$ on the added boundary of $\hU$,
then there exists a hyperbolic fixed point $p$ on $\partial U$ and
a small open neighborhood $W$ of $p$, such that $W\cap \partial U$ 
consists of one or two branches of the local invariant manifold of $p$.
\end{lem}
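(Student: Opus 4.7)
The plan is to prove the lemma in three phases. In the first, apply Lemma \ref{downfixed} to the given fixed point $\hp$ of $\hf$ on the added boundary of $\hU$: part~(1) yields a fixed point $p\in\partial U$ of $f$, and part~(2), combined with the standing hypothesis that every elliptic fixed point of $f$ is Moser stable, forces $p$ to be hyperbolic. Then invoke Hartman's theorem to fix a $C^0$ local chart $W_0\cong(-\epsilon_0,\epsilon_0)^2$ around $p$ in which $f(x,y)=(\lambda x,\lambda^{-1}y)$, so the local invariant manifold of $p$ is the union of the two coordinate axes in $W_0$. Applying Lemma \ref{branch} to the compact, connected, $f$-invariant continuum $K$ (which contains $p$, since $p\in\partial U\subseteq K$) against each of the four global branches $\gamma^{u,\pm},\gamma^{s,\pm}$ of $p$ shows that each branch is either contained in $K$ or disjoint from $K$.

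The main obstacle is the second phase: showing that, after shrinking to a smaller neighborhood $W\subseteq W_0$, the intersection $W\cap K$ lies inside the union of the two coordinate axes. I plan to argue by contradiction: assume there exist points $q_n\in K\cap W_0$ with $q_n\to p$ and no $q_n$ on the axes; passing to a subsequence, arrange all $q_n$ to lie in one fixed open quadrant, say $Q_1=\{x>0,\,y>0\}$, on hyperbolic orbit curves $\{xy=c_n\}$ with $c_n\to 0$. I would then adapt the area argument from the proof of Lemma \ref{branch}: the existence of such $q_n\in K\cap Q_1$ forces $U\cap Q_1$ to be a proper nonempty open subset of $Q_1$; hyperbolic iteration spreads $U$ along the unstable axis while contracting it along the stable one; and a localized volume count near $p$, echoing the comparison of $m(f^k V)$ and $m(V)$ in Lemma \ref{branch}, contradicts area preservation of $f$. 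This is the delicate step because, \emph{a priori}, $K$ could contain thin Cantor-like invariant subsets accumulating on $p$ along the hyperbolic foliation, and ruling them out requires the full interplay of the hyperbolic linearization, invariance of $U$, and area preservation.

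In the third phase, once $W\cap K$ lies inside the local axes, let $k\in\{0,1,2,3,4\}$ be the number of global branches of $p$ contained in $K$, and pin down $W\cap\partial U$ by a short case analysis. If $k=1$, then $W\setminus K$ is a single connected region and $W\cap\partial U$ is precisely that one axis segment. If $k\ge 2$, the set $W\setminus K$ splits into at most four open regions, each bounded in $W$ by exactly two axis segments, so the connected component making up $W\cap U$ contributes exactly two branch segments to $W\cap\partial U$. The degenerate possibility $k=0$ (giving $W\cap K=\{p\}$) is excluded by the context in which the lemma will be applied, where $K$ is the closure of an invariant manifold branch and hence contains at least one branch through $p$. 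This yields the desired ``one or two branches'' conclusion.
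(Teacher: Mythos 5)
Your Phase 1 matches the paper: Lemma \ref{downfixed} produces a hyperbolic fixed point $p\in\partial U$, and Hartman's theorem gives the linearizing chart. The problem is Phase 2, which is the entire content of the lemma, and there your plan both aims at the wrong statement and contains no actual argument. You propose to show that $W\cap K$ lies in the union of the two coordinate axes, deriving a contradiction from points $q_n\in K$ accumulating on $p$ inside an open quadrant via ``a localized volume count''. No such contradiction exists: in the lemma $K$ is an arbitrary compact connected invariant set, and it can perfectly well accumulate on $p$ from inside an open quadrant --- take $K=\overline{W^u(p)}$ when $p$ has a transverse homoclinic point, a situation fully compatible with area preservation and with all periodic points being hyperbolic or Moser stable. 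The conclusion of the lemma concerns $\partial U$, the boundary of the one component $U$ carrying the fixed prime end, which can be a proper subset of $K$; by conflating $\partial U$ with $K$ you have made your intermediate target false, and the sketched area argument (``hyperbolic iteration spreads $U$ along the unstable axis\dots'') never produces an inequality violating invariance of the measure. Your Phase 3 inherits the same defect: which local branches form $W\cap\partial U$ is determined by which quadrants $U$ occupies, not by how many global branches lie in $K$, so the case count must be run on $U$.

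What is actually needed, and what the paper does, is a dichotomy on $U$ itself. If some local branch $\gamma$ meets $U$, then $\gamma\subset U$ by Lemma \ref{branch}, the two adjacent open quadrants are absorbed into $U$ by invariance, and an induction around the four quadrants shows that $U$ is stopped by one or two local branches (it cannot absorb all four, else $U$ would contain a full punctured neighborhood of $p$, a contradiction). If no local branch meets $U$, one must use the hypothesis your Phase 2 essentially discards: the defining chain $\{\gamma_n\}$ of the fixed prime end $\hp$. The paper places the $\gamma_n$ in one quadrant, cuts $U$ by the corner curves $\Gamma_\epsilon$, and uses $f\gamma_n\cap\gamma_n\neq\emptyset$ together with area preservation to force the arc of $\Gamma_\epsilon$ between $f^{-1}a_\epsilon$ and $a_\epsilon$ into $U$ for every small $\epsilon$; this shows $U$ contains a cone, hence a full quadrant, and only then does $\partial U\cap W$ collapse onto local branches. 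Without an argument of this kind, invoking the fixed prime end, the lemma is out of reach.
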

\begin{proof}
Let $\hp$ be a fixed point of $\hf$. Then by Lemma \ref{downfixed},
there exists a hyperbolic fixed point $p$ on $\partial U$. 
Let $W$ be a small neighborhood of $p$ given by Hartman's theorem.
There are two cases and we treat them separately:

\noindent {\bf Case 1.} $\gamma\cap U\neq\emptyset$ for some local branch $\gamma$ of $p$ in $W$.
In this case we have $\gamma\subset U$ (by Lemma \ref{branch}),
since $U$ is a connected component of $M\backslash K$. 
Moreover, the two open quadrants on both sides of $\gamma$
are contained in $U$ (by the invariance of $U$). Now we consider the new-reached branches, 
say $\hat\gamma$.
Either $\hat\gamma$ doesn't intersect $U$ (then it serves as a boundary piece of $U$), 
or we can continue
the above argument such that $U$ contain a new quadrant adjacent to $\hat\gamma$, 
and push the boundary to the next local branch.

Inductively, we see that either $U$ stops at some branch, 
which serves as the boundary piece of $U$ (so our conclusion holds in this case), 
or we keep going such that all four branches intersect $U$. 
In the later case, $U$ contains the whole open neighborhood of $p$, a contradiction.

\noindent {\bf Case 2.} $\gamma\cap U=\emptyset$ for all local branches $\gamma$. 
Let $\{\gamma_n\}$ be a collection of open arcs with $|\gamma_n|\le 1/n$ 
that define the point $\hp$ and locate the point $p$. 
Passing to a subsequence and relabeling the quadrants, we 
assume $\gamma_n$ lies in the first quadrant for all $n$. Then consider the curves 
$\Gamma_\epsilon=\{(x,\epsilon):0\le x\le \epsilon\}\cup \{(\epsilon,y):0\le y\le\epsilon\}$,
and the rectangles $R_\epsilon$ in the first quadrant bounded by $\Gamma_\epsilon$.
Pick $\epsilon>0$ small enough such that $R_\epsilon$ is disjoint from $\gamma_1$. 
Still $R_\epsilon$ contains $\gamma_n$ for some large enough $n=n_\epsilon$.

All $\gamma_n$ are chosen from a connected open set $U$. 
So $\Gamma_\epsilon\cap U\neq\emptyset$, and some component of this intersection separates
$\gamma_1$ and $\gamma_n$ in $U$, say $\beta_\epsilon$, 
which is an open arc in $U$, with both endpoints on $K$.

Let $U_1$ and $U_2$ be the two components of $U$ separated by $\beta_\epsilon$.
Since $f\gamma_n\cap \gamma_n\neq\emptyset$ for all $n\ge1$, we see that
$fU_i\cap U_i\neq\emptyset$ for both $i=1,2$ 
(since they contain $\gamma_1$ and $\gamma_{n}$, respectively).
Being the only boundary piece of $U_i$ in $U$, $f\beta_\epsilon$ can't lie on one side of
$\beta_\epsilon$ (by the area-preserving property of $f$).
So $f\beta_\epsilon$ must meet $\beta_\epsilon$. 
By the choice of Hartman coordinate system around $p$, 
we see that $f\Gamma_\epsilon\cap\Gamma_\epsilon$
consists of a single point $a_\epsilon=(\epsilon,\epsilon\lambda^{-1})$, 
which must lies on  $\beta_\epsilon\cap f\beta_\epsilon$. Then it is easy to see
 its pre-image $f^{-1}a_\epsilon=(\epsilon\lambda^{-1},\epsilon)$ also lies in $\beta_\epsilon$.

According to the definition of $\beta_\epsilon$,
we see the whole piece $\Gamma_\epsilon(f^{-1}a_\epsilon,a_\epsilon)$ is contained in 
$\beta_\epsilon$,
which is also contained in $U$.
Moreover, this holds for all smaller $\epsilon$. So $U$ contains a cone in the first quadrant,
and then the whole first quadrant by the invariance of $U$. In this case, the boundary
consists also the local branches. This finishes the proof.
\end{proof}

\begin{pro}\label{recurrent}
Suppose that each periodic point of $f$ is either hyperbolic, or elliptic and Moser stable.
Then for any branch $L$ of the invariant manifolds of any hyperbolic periodic point $p$, 
we have
\begin{enumerate}
\item  either $L$ contains some recurrent point.
\item or $L$ forms a saddle connection. 
\end{enumerate}
\end{pro}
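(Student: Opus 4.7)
The plan is to split the proof into two cases based on Oliveira's dichotomy (Section 2.2), handling the substantive case via prime-end methods. A preliminary reduction replaces $f$ by the iterate $f^{2n}$ where $n$ is the period of $p$, so that $p$ becomes a hyperbolic fixed point and each branch at $p$ is $f$-invariant; this preserves the area-preserving property and the hypothesis that every periodic point is hyperbolic or Moser-stable elliptic. For concreteness I take $L$ to be an unstable branch (the stable case is dual under $f^{-1}$).

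Oliveira's proposition yields either $L \subset \omega(L)$ or $L \cap \omega(L) = \emptyset$. In the first alternative $\omega(L) = \overline{L}$ and $L$ is dense in its own closure; combined with the area-preserving property via Poincar\'e recurrence applied to small tubular arcs together with a compactness argument along tail sequences $y_k \to y \in L$, one produces a genuine recurrent point in $L$, giving conclusion~(1). The substantive case is $L \cap \omega(L) = \emptyset$, treated by prime ends.

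In this case set $K = \overline{L}$, which is compact, connected, and $f$-invariant. The complement $M \setminus K$ is nonempty, and $f$ permutes its connected components; since each component has positive area and $m(M) < \infty$, every component has finite $f$-orbit, so after replacing $f$ by a further power I fix an $f$-invariant component $V$ with $L \subset \partial V$. Form the prime-end extension $\hat{f}: \hat{V} \to \hat{V}$. The key geometric step is to produce a fixed point of $\hat{f}$ on the added boundary circle associated with the piece of $\partial V$ containing $L$: because $L$ is $f$-invariant and accessible from $V$, it lifts to an $\hat{f}$-invariant arc of this circle, with endpoints corresponding to $p$ and to $\omega(L)$. An orientation-preserving circle homeomorphism with a proper invariant arc has rational rotation number, so after one more power of $f$ we obtain an honest fixed prime end.

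Given the fixed prime end, Lemma~\ref{downfixed}(2) produces a hyperbolic fixed point $q \in \partial V$, Lemma~\ref{open} identifies a neighborhood of $q$ in $\partial V$ with one or two local branches of the invariant manifolds of $q$, and Lemma~\ref{branch} promotes each such local branch to a global branch entirely contained in $\overline{L}$. A comparison of branches then forces $L$ to coincide with a stable branch of $q$: both $L$ and the stable branch live in $\overline{L}$ and are simultaneously accessible from the same component $V$, and two distinct branches of hyperbolic periodic points that share a common closure and a common access from $V$ must coincide. This yields conclusion~(2). The main obstacle is the prime-end fixed-point step in the third paragraph, which requires identifying the correct prime-end circle (since $\partial V$ may split into several connected pieces), locating the arc determined by $L$ on it, and carrying out the rotation-number analysis; secondary care is also needed in the first alternative to genuinely extract a recurrent point on $L$ from the density property $L \subset \omega(L)$.
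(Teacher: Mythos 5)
Your proposal follows essentially the same route as the paper's proof: reduce to a fixed point with fixed branches, isolate the case $L\cap\omega(L)=\emptyset$, take the prime-end compactification of an invariant complementary component of $\overline{L}$ whose boundary contains $L$, extract a fixed prime end from the invariant arc that $L$ traces on the added circle (the paper gets it as the $\omega$-limit of the lifted points $\hx$, using that the endpoint $\hp$ is fixed so the rotation number is zero), descend to a hyperbolic fixed point $q$ on the boundary via Lemma~\ref{downfixed}, and use Lemma~\ref{open} to identify the boundary near $q$ with local invariant branches, forcing $L$ to coincide with a stable branch and hence form a saddle connection.

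The one step where your stated justification would not literally work is the first alternative: Poincar\'e recurrence only sees positive-measure sets, so applying it to ``small tubular arcs'' cannot by itself produce a recurrent point \emph{on} the measure-zero set $L$ from the inclusion $L\subset\omega(L)$; one needs a genuine nested-return construction along the fundamental domains (and controlling the diameters of the far-out fundamental domains is the real issue there). Note, however, that the paper's proof leans on the very same implication implicitly --- it passes from ``$L$ has no recurrent point'' directly to the disjoint decomposition $\overline{L}=\{p\}\sqcup L\sqcup\omega(L)$ without comment --- so this is a shared soft spot rather than a defect peculiar to your argument.
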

\begin{proof}
Without loss of generality, we assume $p$ is fixed and all four quadrants are also fixed by $f$,
$L$ is an unstable branch, say the positive $x$-axis, which is also fixed by $f$.
Suppose that $L$ contains no recurrent point. 
Then $L$ can't approach $p$ from the first and fourth 
quadrants, and $K:=\overline{L}=\{p\}\sqcup L\sqcup \omega(L)$.
Therefore,  $M\backslash K$ contains the first and fourth quadrants of a smaller neighborhood $W$,
which will grow to two open strips attaching to $L$ (by the invariance of $M\backslash K$).
Let $U$ be the connected component of $M\backslash K$ containing the first quadrant.
It is easy to see that $U$ is invariant (being a component).
Let $\hU=U\sqcup S^1$ be the prime-end compactification of $U$, $\hf: \hU\to \hU$
be the prime-end extension and $g=\hf|_{S^1}:S^1\to S^1$ be the induced
 circle homeomorphism.

Note that each point $x\in L$ is a regular point of the boundary $\partial U$, 
and gives rise to at least one point in $\hU$,
say $\hx\in S^1\subset \hU$ (when observing $L$ from the first quadrant). 
Denote by $\hp$ the one-side limit of $\hx$ in $S^1$ as $x$ moves back to $p$ along $L$.
Clearly $\hp$ is a fixed point of $g$.  So the rotation number of $g$ is zero,
the omega-set $\omega(\hx)$ ($x\in L$) consists of a single fixed point of $g$, say $\hq$.
Note that if $L$ never comes back to $p$, then $U$ also contains the fourth quadrant 
and we may get two points when observing $L$ from the the first and fourth quadrants respectively, say
$\hq_{+}$ and $\hq_{-}$. Let's take $\hq=\hq_{+}$ in that case for certainty.
Then by Lemma \ref{downfixed}, there exists a hyperbolic fixed point $q\in K$.
%Clearly $q$ is not on the boundary $\partial M$, since there is no periodic point there.

\noindent{\bf Case 1.} $q=p$. According to Lemma \ref{open},
there exists a small neighborhood $W$ of $p$, such that
$W\cap \partial U$ consists only of the branch(es) of local invariant manifolds.
Each piece is regular on $\partial U$, and hence contained in $L$.
Clearly $L$ can not intersect the unstable branches. So $L$ must coincide with
one  branch of the stable manifold of $p$, which forms a homoclinic loop.

\noindent{\bf Case 2.} $q\neq p$.
Applying Lemma \ref{open} again, we see that there exists a small neighborhood $V$ of $q$,
such that $V\cap \partial U$ consists only of the branch(es) of the local invariant manifolds of $q$.
Each piece is regular on $\partial U$, and hence contained in $L$.
Once again it is a stable branch of $q$ and forms a path of heteroclinic connections from $p$ to $q$. 

Therefore $L$ always serve as a saddle connection
under the assumption that  $L$ contains no recurrent point. This finishes the proof.
\end{proof}

\begin{cor}[\cite{Mat2}]\label{closure}
Suppose that each periodic point of $f$ is either hyperbolic, or elliptic and Moser stable.
If $f$ doesn't have any path of saddle connection, then
all four branches of any hyperbolic periodic point are recurrent and have the same closure.
\end{cor}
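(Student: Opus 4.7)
The first claim (recurrence of every branch) is immediate from Proposition \ref{recurrent}: the no-saddle-connection hypothesis excludes alternative (2), so every branch $L$ of a hyperbolic periodic point contains a recurrent point, and the Oliveira dichotomy then gives $L \subset \omega(L) = \overline{L}$.

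For the equality of closures I will fix a hyperbolic periodic point $p$ of period $n$ and two of its branches $L, L'$; by symmetry it suffices to prove $L' \subset \overline{L}$. Set $K := \overline{L}$, which is closed, connected, and $f^{2n}$-invariant, so Lemma \ref{branch} yields $L' \subset K$ or $L' \cap K = \emptyset$, and my plan is to show the second alternative forces a saddle connection. Let $U$ be the component of $M \setminus K$ containing $L'$; then $p \in \partial U$, and the accumulation of $L'$ at $p$ produces a fixed prime end $\hp$ of $\hf^{2n}|_{S^1}$. I will apply Lemmas \ref{downfixed} and \ref{open} to the iterate $f^{2n}$ (legitimate because hyperbolicity and Moser stability are preserved under iteration) to show that $\partial U \cap W$ consists of one or two branches of local invariant manifold of $p$, each lying in $\overline{L}$. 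Following the regularity argument used in Proposition \ref{recurrent} Case 1, each such boundary branch $\eta$ is in fact contained in the curve $L$ itself; by the uniqueness of invariant manifolds through $p$, $\eta$ must then be the local part of $L$ (otherwise $L$ would coincide globally with a different branch of $p$, a forbidden saddle connection). Since there is only one local piece of $L$ near $p$, $W \cap \partial U$ is exactly this single arc, yielding the ``slit-disk'' configuration in which $U$ covers every sector around $p$ except for the slit along local $L$, and therefore contains the three other global branches of $p$.

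The contradiction then comes from analysing $\hf^{2n}|_{S^1}$. The two arcs of $S^1$ emerging from $\hp$ represent the two sides of the unstable slit $L$ at $p$ on which $f^{2n}$ expands away from $p$, so $\hp$ is a two-sided repelling fixed point of the orientation-preserving circle homeomorphism $\hf^{2n}|_{S^1}$. A lift-to-$\mathbb{R}$ argument — the displacement function $\tilde g(x) - x$ is positive just to the right of one lift of $\hp$ and negative just to the left of the next — produces by the intermediate value theorem a second fixed point $\hq \ne \hp$, attracting from at least one side. The $f^{2n}$-analogue of Lemma \ref{downfixed} then projects $\hq$ to a hyperbolic $f$-periodic point $q \in \partial U \subset \overline{L}$ of period dividing $2n$, and Lemma \ref{open} at $q$ exhibits local branches of $q$'s invariant manifold in $\partial U \cap \overline{L}$, which the same regularity argument places inside $L$. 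By uniqueness of invariant manifolds, $L$ must then coincide globally with a branch of $q$, producing a homoclinic loop if $q = p$ and a heteroclinic connection otherwise — the desired contradiction.

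The step I expect to be the main obstacle is the circle-dynamics argument producing $\hq$ and extracting from it the forbidden saddle connection; the two technical prerequisites — extending Lemmas \ref{downfixed} and \ref{open} from fixed points of $\hf$ to fixed points of $\hf^{2n}$, and justifying the ``regular hence contained in $L$'' step for boundary branches — are routine in the first case and follow the implicit mechanism of Proposition \ref{recurrent} Case 1 in the second.
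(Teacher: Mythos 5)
Your first paragraph (recurrence of each branch) matches the paper. The second part, however, has a genuine gap, and it occurs exactly where you flagged the least concern: the step ``following the regularity argument used in Proposition \ref{recurrent} Case 1, each such boundary branch $\eta$ is in fact contained in the curve $L$ itself.'' That regularity argument is only valid under the standing hypothesis of Proposition \ref{recurrent}'s proof, namely that $L$ contains \emph{no} recurrent point, so that $\overline{L}=\{p\}\sqcup L\sqcup\omega(L)$ with $L\cap\omega(L)=\emptyset$ and the points of $L$ are precisely the locally arc-like points of $\overline{L}$. In the Corollary you are in the opposite regime: you have just proved $L\subset\omega(L)$, so $\overline{L}$ accumulates on $L$ itself and a local boundary branch of $U$ lying in $\partial U\subset\overline{L}$ need not be a subset of $L$ at all --- it can perfectly well be a local branch lying in $\omega(L)\setminus L$. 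Your conclusion that $W\cap\partial U$ is the single local arc of $L$ (the ``slit-disk'' configuration) therefore does not follow; worse, that configuration is actually impossible, for the very reason the paper gives when it discards it: since $L$ is recurrent, $\overline{L}$ accumulates on the local arc of $L$ from at least one side, so the component $U$ of $M\setminus\overline{L}$ cannot occupy both sides of the slit. Your entire second half (the two-sided repelling prime end $\hp$, the second fixed prime end $\hq$, and the saddle connection extracted from it) is built on this vacuous configuration and does not engage with the case that actually has to be excluded.

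The paper's route is the mirror image of yours. It fixes the branch $\Gamma$ \emph{adjacent} to $L$ (and inducts counterclockwise), takes $U$ to be the component of $M\setminus\overline{L}$ containing $\Gamma$, and then eliminates candidates for $W\cap\partial U$: not the single arc of $L$ (by recurrence, as above), not $\Gamma$ (disjoint from $\overline{L}$), not $L^{-}$ (both unstable); hence the boundary must contain the local arc of the opposite stable branch $\Gamma^{-}$, and it is \emph{this} configuration --- a local stable branch sitting inside $\overline{L}$ as an accessible boundary piece of $U$ --- that forces the homoclinic loop $L=\Gamma^{-}$ and contradicts the no-saddle-connection hypothesis. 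To repair your argument you would need (i) to restrict to adjacent branches so the case analysis of which local branches bound $U$ is controlled, and (ii) an argument that a local branch $\eta\subset\partial U$ with $\eta\subset\overline{L}$ but $\eta\not\subset L$ still forces a saddle connection; neither is supplied by the uniqueness-of-invariant-manifolds appeal you make, since that appeal presupposes $\eta\subset L$.
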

\begin{proof}
Let $L$, $\Gamma$, $L^-$ and $\Gamma^-$ be the four branches of a  hyperbolic periodic point $p$ 
(ordered counterclockwise).
It suffices to show that the closure of any given branch contains the branch next to it. 
Then by induction we see that the closure contains all four branches and serves as the common closure.
Without loss of generality, we assume $L$ is an unstable branch and show $\Gamma\subset \overline{L}$.

If this is not the case, then $\Gamma\cap \overline{L}=\emptyset$ (by Lemma \ref{branch}).
Let $U$ be a connected component of $M\backslash \overline{L}$ containing $\Gamma$. 
Clearly $p$ is an accessible boundary point from $U$ (along $\Gamma$). 
Then by (the proof of) Lemma \ref{open}, there exists an open neighborhood 
$W\ni p$ such that $\partial U\cap W$ consists of one or two local branches. 
Clearly it can't be the single branch $L$ (by the recurrence from Lemma \ref{recurrent}).
It can't contain $\Gamma$ (since $\Gamma\cap \overline{L}=\emptyset$), 
or $L^-$ (both $L$ and $L^-$ are unstable branches). So it has to contain $\Gamma^-$,
which leads to a homoclinic loop $L=\Gamma^-$ and contradicts the assumption
that there is no path of saddle connection. This completes the proof.
\end{proof}

\section{Homoclinic points on generic convex billiard systems}

Let $\cU_n$ be the set of convex billiard tables given in \S \ref{Un}, and 
$\cR=\bigcap_{n\ge1}\cU_n$. Then by Lemma \ref{finite}, 
we see that $\cR$ contains a $C^\infty$-residual subset of 
$\cQ^\infty$, such that for every billiard table  $Q\in \cR$, 
for the billiard map $F$ on the phase space $M=\partial Q\times(-1,1)$,
\begin{enumerate}
\item[(a)] each periodic point of $F$ is either hyperbolic or elliptic;

\item[(b)] given two hyperbolic periodic points $p,q$, 
$W^u(p)$ and $W^s(q)$ either don't intersect, or have some transversal intersections;

\item[(c)]  every elliptic periodic point  is Moser stable.
\end{enumerate}

In the following, let $p$ be a hyperbolic periodic point of $F$,
$L$ be a branch of the unstable manifold of $p$,
$K=\overline{L}$, which is compact, connected, $f$-invariant,
where $f=F^n$ (or $F^{2n}$ if necessary).
According to Proposition \ref{finite}, we can assume 
$p\in\cA_n=\{(r,s):|s|\le 1-1/n\}$ without loss of generality.
According to Lazutkin Theorem, there always exist invariant circles 
accumulating to both boundaries $\{(r,s):s=\pm 1\}$ of $M$.
So what happens near the boundary does not affect dynamics in the middle.
We can collapse $\partial M$ to two points, say $N$ and $S$, and the result surface is homeomorphic
to $S^2$. Clearly both $N$ and $S$ are surrounded by invariant closed disks 
enclosed by Lazutkin invariant circles and have Moser-stable behaviors.
The following is a direct application of Proposition \ref{recurrent}, 
since the paths of saddle connections are excluded from systems in $\cR$.
\begin{cor}\label{billrecu}
Let $Q\in\cR$ and $p$ be a hyperbolic periodic point.
Then the two unstable branches are recurrent under $f$, 
and two stable branches are recurrent under $f^{-1}$.
\end{cor}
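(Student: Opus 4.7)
The plan is to apply Proposition \ref{recurrent} to each branch of $p$ and use the generic property (b) defining $\cR$ to exclude the saddle-connection alternative. The only preparation needed is a reduction of the open phase space $M = \partial Q \times (-1,1)$ to a closed surface, which I expect to be the main (though not difficult) bookkeeping step.

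For the reduction, Lazutkin's theorem supplies a family of invariant circles accumulating on each boundary component $\{s=\pm 1\}$, so collapsing each boundary circle to a single point yields a space homeomorphic to $S^2$, to which $f=F^{2n}$ extends as an area-preserving homeomorphism. Each of the two new fixed points $N$ and $S$ is surrounded by a fundamental system of invariant closed disks bounded by the surviving Lazutkin curves on which $f$ acts as a minimal rotation; hence $N$ and $S$ are Moser stable. Combined with properties (a) and (c) granted by $Q\in\cR$, this shows that on $S^2$ every periodic point of $f$ is either hyperbolic, or elliptic and Moser stable, so the hypothesis of Proposition \ref{recurrent} is satisfied.

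Now I apply Proposition \ref{recurrent} to any branch $L$ of the invariant manifolds of $p$: either $L$ contains a recurrent point, or $L$ forms a path of saddle connection. Suppose the latter. Then $L$ is simultaneously an unstable branch of $p$ and a stable branch of some hyperbolic periodic point $q$; at every point of $L$ the tangent lines to $W^u(p)$ and $W^s(q)$ coincide with the tangent line to $L$ itself, so no point of $W^u(p)\cap W^s(q)$ is a transversal intersection. This contradicts property (b) in the definition of $\cR$. Hence $L$ must contain a recurrent point $x$, and from $x\in\omega(x)\subset\omega(L)$ the Oliveira dichotomy stated in the preliminaries forces $L\subset\omega(L)$, i.e.\ $L$ is recurrent under $f$. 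The argument for stable branches is identical with $f$ replaced by $f^{-1}$.
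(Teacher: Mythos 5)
Your proof follows the paper's route exactly: reduce to $S^2$ by collapsing the two boundary circles behind the Lazutkin invariant curves, check that the two new fixed points $N,S$ are Moser stable so that Proposition \ref{recurrent} applies, and then discard the saddle-connection alternative using property (b) of $\cR$. You even add a step the paper leaves implicit, namely passing from ``$L$ contains a recurrent point'' to ``$L\subset\omega(L)$'' via the Oliveira dichotomy, which is a small but genuine improvement in completeness. The one place you overstate is the claim that a saddle connection along the branch $L$ forces \emph{no} point of $W^u(p)\cap W^s(q)$ to be a transversal intersection: your tangency argument only covers points of $L$ itself, while property (b) as stated merely demands \emph{some} transversal intersection, which could a priori occur on a different branch. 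The paper makes the same leap (it simply asserts that paths of saddle connections are excluded for $Q\in\cR$), so this is not a divergence from the source, but as written the contradiction with (b) is not fully forced.
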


\begin{pro}\label{accumu}
There always exists an adjacent pair $(K,L)$ of stable and unstable branches such
that they accumulate on each other from the quadrant between them.
\end{pro}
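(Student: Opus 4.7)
The plan is to pinpoint a local quadrant at $p$ which is entered simultaneously by an unstable branch $L^+$ and its adjacent stable branch $S^+$, and then to read off the desired mutual accumulation from the local hyperbolic dynamics. Without loss of generality $p$ is a fixed point of $f$ (else replace $F$ by $F^{2n}$) with all four local branches invariant. Work in a Hartman chart $W$ where $f(x,y)=(\lambda x,\lambda^{-1}y)$, $\lambda>1$; label $L^{\pm}$ along the $x$-axis, $S^{\pm}$ along the $y$-axis, and the four open local quadrants $Q_1,\dots,Q_4$ cyclically, with $Q_1$ bounded by $L^+$ and $S^+$.

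First I would place $L^+$ inside one of its adjacent quadrants. Corollary \ref{billrecu} together with the $\omega$-limit dichotomy recalled in \S 2.2 gives $L^+\subset\omega(L^+)$, so for any $x_0$ on the local piece of $L^+$ close to $p$ there is a sequence $y_k=f^{n_k}(z_k)\to x_0$ with $z_k$ in a fixed fundamental domain and $n_k\to\infty$. Since $n_k\to\infty$, the $y_k$ cannot lie on the local piece through $x_0$, so each $y_k$ is strictly above or strictly below the $x$-axis; passing to a subsequence, WLOG $y_k\in Q_1$, and hence $L^+\cap Q_1\neq\emptyset$.

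The crucial step is then to transfer this into $S^+$ via Corollary \ref{closure}: since $\overline{L^+}=\overline{S^+}$, any $y\in L^+\cap Q_1$ lies in $\overline{S^+}$, so some sequence $w_m\in S^+$ converges to $y$; by openness of $Q_1$ we have $w_m\in Q_1$ eventually, hence $S^+\cap Q_1\neq\emptyset$. With both branches present in $Q_1$, I would finish via the hyperbolic dynamics: every orbit in $Q_1$ traces a hyperbola $xy=\text{const}$, running from arbitrarily close to $S^+$ under backward iteration to arbitrarily close to $L^+$ under forward iteration. Writing $y_k=(a_k,b_k)$ with $b_k\to 0$ and choosing $m_k$ so that $b_k\lambda^{m_k}\in[\lambda^{-1},1]$, the backward iterates $f^{-m_k}(y_k)\in L^+\cap Q_1$ subconverge to a point on $S^+$, exhibiting $L^+\cap Q_1$ accumulating on $S^+$ from $Q_1$. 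Diagonally extracting from the $S^+$-sequences $w_m^{(k)}\to y_k$ furnished by Corollary \ref{closure} (one for each $k$) produces a sequence in $S^+\cap Q_1$ converging to $x_0\in L^+$, which yields the reverse accumulation of $S^+\cap Q_1$ on $L^+$ from $Q_1$. Thus $(K,L)=(S^+,L^+)$ is the desired adjacent pair.

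The main obstacle is precisely the closure-equality input: without it, the recurrences of different branches could in principle pick different quadrants in a cyclic ``checkerboard'' pattern leaving no adjacent unstable-stable pair sharing a quadrant in both directions. Corollary \ref{closure} collapses this combinatorics, forcing the indicator ``$\cdot\cap Q_i\neq\emptyset$'' to agree across all four branches, after which the hyperbola-of-orbits picture is routine.
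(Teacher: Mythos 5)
Your proof is correct, and it reaches the conclusion by a genuinely different route than the paper's. The shared core is the same: recurrence of the branches (Corollary \ref{billrecu}), the dichotomy $L\subset\omega(L)$, a Hartman chart at $p$, and the observation that points of a branch sitting in an open quadrant slide along the hyperbolas $xy=\mathrm{const}$ under (backward or forward) iteration, so that presence in a quadrant near one local axis propagates to accumulation on the other local axis from inside that quadrant. Where you diverge is in how the two branches are forced into a \emph{common} quadrant. The paper never invokes Corollary \ref{closure} here; instead it runs a cyclic chase $L\to K\to L^-\to K^-$, at each stage asking from which of its two adjacent quadrants the next branch self-accumulates, and closes the last case with transitivity of the accumulation relation. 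You instead fix once and for all the quadrant $Q_1$ of self-accumulation of $L^+$ and use the common-closure statement $\overline{L^+}=\overline{S^+}$ of Corollary \ref{closure} (available, since it is proved in Section 3 and systems in $\cR$ have no saddle connections by property (b)) to pull the adjacent stable branch into $Q_1$; the diagonal extraction and the backward-iteration-along-hyperbolas step then give both directions of accumulation. This buys a shorter argument and a slightly stronger conclusion --- the adjacent pair bounding the self-accumulation quadrant of \emph{any prescribed} unstable branch works --- at the cost of leaning on Corollary \ref{closure} rather than only on recurrence. Two small points you assert tersely but correctly and should perhaps spell out: the returning points $y_k=f^{n_k}(z_k)$ avoid the local arc of $L^+$ because their parameter along the injectively immersed curve tends to infinity with $n_k$, and the backward iterates $f^{-j}(y_k)$, $0\le j\le m_k$, remain inside the Hartman chart since they stay on a hyperbola $xy=a_kb_k\to0$ with both coordinates bounded by the chart size, so the linear formula applies throughout.
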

\begin{proof}
Let's start with the branch $L$ of unstable manifold from the positive $x$-axis. 
Let $q$ be a recurrent point given by Corollary \ref{billrecu}. 
So some iterates $f^{n_k}q$ accumulates on $q$ itself, from either the first quadrant $Q1$
or from the fourth quadrant $Q4$ (possibly both). We may assume that it is the
former case, the orbit of q accumulates from $Q1$. 
Then it is easy to see the branch $K$ of the stable manifold from the positive $y$-axis
is contained in $\omega(L)$.

There is a backward recurrent point $q$ on $K$, whose backward orbit 
accumulates on itself either from $Q1$ or
the second quadrant $Q2$. In the former case, we have an adjacent pair of stable
and unstable branches $(L,K)$ that accumulate on each other. 

Now let's assume the backward orbit of $q$
accumulates on itself from $Q2$, which implies $L^-\subset \alpha(K)$.
Now let's consider $L^-$, which can accumulate to itself from either $Q2$,
or from the third quadrant $Q3$. If it accumulates from $Q2$, 
then we have an adjacent pair $(K,L^-)$. 
Otherwise let's check $K^-$. Either we have an adjacent pair $(L^-,K^-)$, 
or $K^-$ accumulates on $L$ though $Q4$
and the accumulating relations of four branches form a closed chain.
However, the accumulating relations are transitive.
Since $K^-$ accumulates to itself though $Q4$, $L$ also accumulates on $K^-$ though $Q4$.
This still gives rise to an adjacent accumulating pair $(L,K^-)$. 
\end{proof}

\subsection{Recurrence and homoclinic orbits}

\begin{thm}
Let $Q\in\cR$ and $p$ be a hyperbolic periodic point with an adjacent accumulating pair.
Then there exists a homoclinic orbit of $p$. 
\end{thm}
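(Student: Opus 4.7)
The plan is to argue by contradiction. Property (b) of tables in $\cR$ forces any nonempty intersection of $W^u(p)$ and $W^s(p)$ to include a transversal one, so it suffices to show $L\cap K\neq\emptyset$; so suppose $L\cap K=\emptyset$ and aim to produce a path of saddle connection, which is excluded from $\cR$ (the exclusion underlying Corollary~\ref{billrecu}).

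Passing to $f=F^{2n}$ (with $n$ the period of $p$) makes $p$ fixed and preserves each of its four local branches and four quadrants. Fix a Hartman chart so that $L$ is the positive $x$-axis, $K$ the positive $y$-axis, $f(x,y)=(\lambda x,\lambda^{-1}y)$, and the shared quadrant is the open first quadrant $Q_1$. By Corollary~\ref{closure} we have $\overline{L}=\overline{K}$, so $K\subset\overline{L}\setminus L$. Let $U$ be the connected component of $M\setminus\overline{L}$ whose boundary contains, near $p$, the initial arcs of $L$ (approached from above in $Q_1$) and of $K$ (approached from the right in $Q_1$); then $U$ is open and $f$-invariant. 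Form the prime-end compactification $\hU=U\sqcup S^1$ and the extension $\hf:\hU\to\hU$, whose restriction to $S^1$ is an orientation-preserving circle homeomorphism.

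The central step is to exploit the mutual $Q_1$-accumulation of $L$ and $K$ to produce a fixed point $\hq\in S^1$ of $\hf|_{S^1}$ distinct from any prime-end above $p$. The guiding picture: on one side of a fixed prime-end $\hp$ above $p$, $\hf$ pushes $L$-lifts outward along $S^1$ (since $f$ expands $L$), while on the other side it pulls $K$-lifts inward toward $\hp$ (since $f$ contracts $K$). The accumulation forces the ``far end'' of the $L$-lifts to interleave with the $K$-lifts; a careful order analysis on $S^1$ then yields an additional fixed point $\hq$ of $\hf|_{S^1}$. By Lemma~\ref{downfixed}(2), $\hq$ descends to a hyperbolic fixed point $q\in\partial U\subset\overline{L}$, and Lemma~\ref{open} exhibits a local branch of $q$ contained in $\partial U\subset\overline{L}$. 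Because $\overline{L}\setminus(\{p\}\cup L\cup K)$ consists of limit points of $L$ that are not themselves immersed branches, this branch of $q$ must coincide with $L$ (or, by reversing the roles of $L$ and $K$, with $K$): a homoclinic saddle connection at $p$ when $q=p$, or a heteroclinic one between $p$ and $q$ when $q\neq p$. In either case this contradicts the absence of paths of saddle connection in $\cR$, completing the argument.

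The main obstacle is the $S^1$-order analysis that produces $\hq$: I must track precisely how the interleaved $Q_1$-accumulations of $L$ and $K$ are encoded as arcs on $S^1$, rule out the alternative of a monotone ``wandering'' rotation-zero dynamics on the arc between the $\hp$-lifts, and ensure that the new fixed point genuinely arises from the topology of the accumulation rather than as a dynamically inessential prime-end. If the prime-end route proves intractable at this step, a fallback is a direct topological argument in Hartman coordinates: a near-vertical arc of $L$ close to $K$ in $Q_1\cap W$, forward-iterated by $f$ and constrained by the non-crossing of the four local branches together with area preservation, should produce a forced crossing of $K$ in the spirit of Case~2 of the proof of Lemma~\ref{open}.
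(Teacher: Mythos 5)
Your proposal takes a different route from the paper and, as written, has a genuine gap at its central step. The paper does not argue by contradiction through saddle connections at all: it closes $L[p,q_u]$ and $K[p,q_s]$ into two simple closed curves $C_u$, $C_s$ using short segments inside a thin region $S_{\delta,m}$ of the first quadrant, notes that the four ends at $p$ alternate in cyclic order so that $C_u$ and $C_s$ cross exactly once at $p$, and then uses that the algebraic intersection number of two closed curves on $S^2$ is zero to force a second crossing, which the geometry of $S_{\delta,m}$ pushes onto $L(p,q_u)\cap K(p,q_s)$. This one-shot topological argument is exactly the mechanism that converts mutual accumulation into actual intersection, and it is what your argument lacks.

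Concretely, three things break in your scheme. First, the implication you aim for --- that $L\cap K=\emptyset$ forces a path of saddle connection --- is not available: two branches can accumulate on each other from a common quadrant without intersecting and without any branch coinciding with another; nothing in the definition of $\cR$ rules this out a priori, and excluding it is precisely the content of the theorem. Second, the identification step fails: even granting a fixed prime end $\hq$ and its descent via Lemmas \ref{downfixed} and \ref{open} to a hyperbolic point $q$ with a local branch inside $\partial U\subset\overline{L}$, you cannot conclude that this branch coincides with $L$ or $K$. In Proposition \ref{recurrent} that identification works because $L$ is assumed to have no recurrent point, so $\overline{L}=\{p\}\sqcup L\sqcup\omega(L)$ with $L$ disjoint from its limit set and the regular boundary points of $U$ are exactly points of $L$. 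Here $L$ is recurrent by Corollary \ref{billrecu}, $\overline{L}=\overline{K}$ is typically a large invariant set containing many other hyperbolic periodic points together with their invariant manifolds, and a branch of $q$ lying in $\overline{L}$ need not be $L$ or $K$; your parenthetical claim that $\overline{L}\setminus(\{p\}\cup L\cup K)$ contains no immersed branches is unjustified and generally false. Third, the domain $U$ you describe may not exist: since $L$ accumulates on itself from the first quadrant, $\overline{L}$ invades that quadrant arbitrarily close to $p$, so there need be no component of $M\setminus\overline{L}$ whose boundary near $p$ contains initial arcs of both $L$ and $K$. Your fallback remark about a forced crossing in Hartman coordinates is the right instinct, but the correct implementation is the intersection-number argument above, which requires no prime ends and no contradiction.
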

\begin{proof}
Let $(L,K)$ be the adjacent accumulating pair given by Proposition \ref{accumu}.
Without loss of generality, we assume
$L$ is the branch of the unstable manifold of $p$ along the positive $x$-axis,
$K$ is the branch of the stable manifold of $p$ along the positive $y$-axis,
and they accumulate each other from the first quadrant.

Let $W$ be a small $C^0$ chart around $p$ given by Hartman's theorem,
$\delta$ be a small positive number, $m\ge 4$ be a large integer, and
$S_{\delta,m}=\{(x,y):0\le x,y\le \delta,xy\le \lambda^{-m}\delta\}$ be a small region in
the first quadrant of $W$.
Let $q_u$ be the position on $L$ that hits $S_{\delta,m}$ for first time,
and $q_s$ be the position on $K$ that hits $S_{\delta,m}$ for first time.

\begin{figure}[h]
\begin{overpic}[width=60mm]{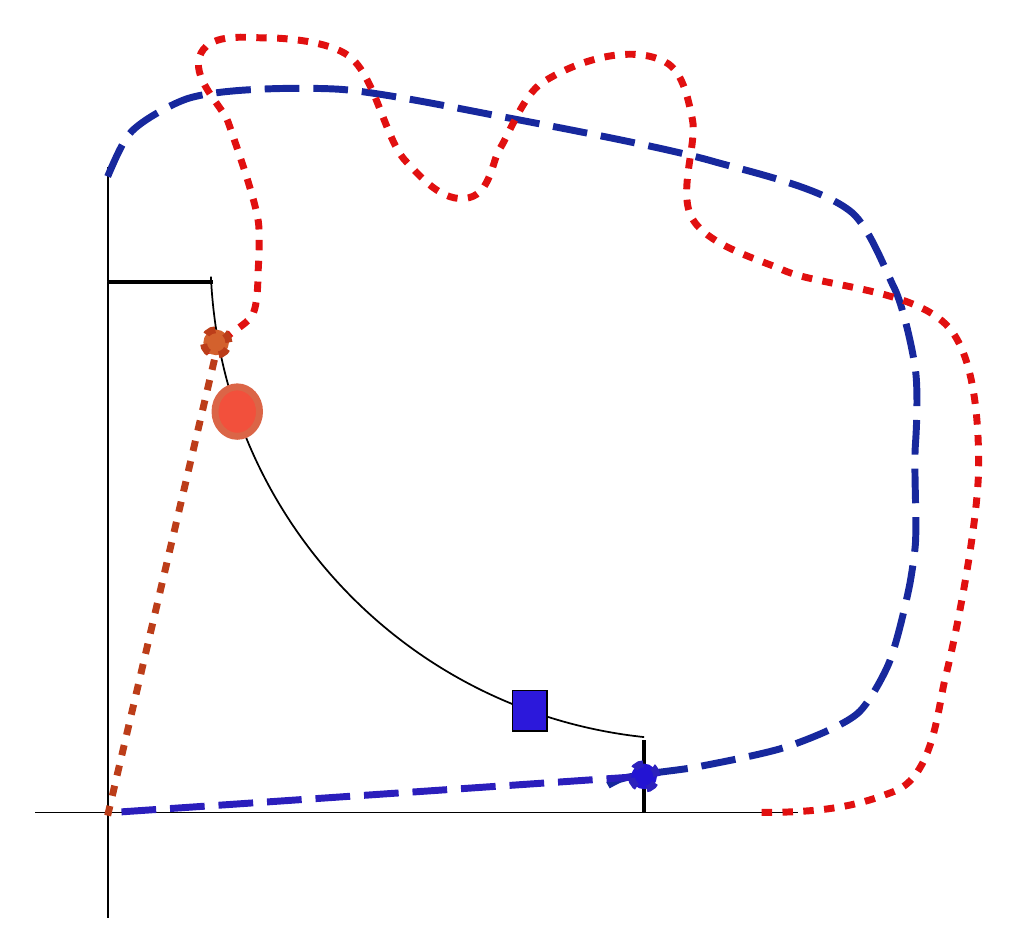}
\put(13,8){$p$}
\put(33,7){$L$}
\put(3,35){$K$}
\put(20,25){$S_{\delta,m}$}
\put(13,60){$q_u$}
\put(65,21){$q_s$}
\end{overpic}
\caption{Existence of homoclinic points from adjacent pair of branches.}
\label{adjacent}
\end{figure}

According  our choice of coordinate system,
$q_u$  lies on the part of boundary $\partial S_{\delta,m}$ above the round spot, which is given by 
$$\{(x,\delta):0\le x\le\lambda^{-m}\}\cup \{(x,y):
xy=\lambda^{-m}\delta, \lambda^{-m}\le x\le\lambda^{1-m}\}.$$
Similarly, $q_s$  lies on  the part of boundary $\partial S_{\delta,m}$ below the square spot, which is given by
$$\{(\delta,y):0\le y\le\lambda^{-m}\}\cup \{(x,y):
xy=\lambda^{-m}\delta, \lambda^{-m}\le y\le\lambda^{1-m}\}.$$

We already have a homoclinic intersection if either $q_u\in K$ or $q_s\in L$.
In the following we assume this is not the case (see Figure \ref{adjacent}). 
Let's consider two simple closed curves $C_u=L[p,q_u]\ast [q_u,p]$
and $C_s=K[p,q_s]\ast [q_s,p]$.
Note that on $S^2$, the algebraic sum 
of the intersection numbers of two closed curves must be zero.
Clearly $C_u$ and $C_s$ cross each other once at $p$.
So  $C_u$ and $C_s$ have to cross each other elsewhere, say $q$.
Note that $q_u$ and $q_s$ are separated by the two shaded spots (since $m\ge 4$).
So $q$ lies outside $S_{\delta,m}$, which forces $q\in L(p,q_u)\cap K(p,q_s)\neq\emptyset$.
That is, $q$ is a homoclinic point of $p$. 
\end{proof}

\begin{cor}\label{4branch}
Let $Q\in\cR$ and $p$ be a hyperbolic periodic point.
Then there exist some transversal homoclinic orbits on all branches of $p$.
\end{cor}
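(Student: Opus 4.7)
The plan is to apply the preceding theorem four times, once for each of the four branches of $p$. For each branch we need only exhibit an adjacent accumulating pair containing that branch; the theorem then delivers a homoclinic intersection lying on the branch, and property (b) of $\cU_n$ (applied at the period of $p$) automatically promotes this intersection to a transversal one, since $Q\in\cR\subset\cU_n$ and any nonempty $W^u(p)\cap W^s(p)$ in $\cU_n$ already contains transversal crossings.

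The nontrivial step is to produce, for each branch, an adjacent accumulating pair containing it. Because $Q\in\cR$ satisfies property (b), no path of saddle connection is possible (such a path would yield a tangential-only intersection), so the hypotheses of Corollary \ref{closure} hold: all four branches of $p$ are recurrent and share a common closure. I would then imitate the case-by-case chain used in the proof of Proposition \ref{accumu}, but rerun it starting once from each of the four branches in turn (and, for the stable branches, with $f$ replaced by $f^{-1}$). Starting from $L$, for instance, the recurrence of $L$ from Corollary \ref{billrecu} provides a near-return from $Q1$ or $Q4$; the terminal adjacent pair produced by the chain is one of $(L,K)$, $(K,L^-)$, $(L^-,K^-)$, or $(L,K^-)$, and the common closure of all four branches ensures that the chain can be continued around the four quadrants until it arrives at a terminal pair containing $L$. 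The symmetric argument supplies adjacent pairs containing $L^-$, $K$, and $K^-$ individually.

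With these four adjacent pairs in hand, the preceding theorem furnishes a homoclinic point on each branch, and property (b) makes each of them transversal. This yields the transversal homoclinic orbits on all four branches required by the corollary. The main obstacle I anticipate is the step asserting that the chain-of-alternatives can always be made to terminate on a pair containing the designated starting branch: the chain of Proposition \ref{accumu} can in principle stop on a pair that omits the chosen branch, and the essential tool for closing this loop is Corollary \ref{closure}, whose common-closure conclusion prevents any branch from being dynamically isolated and hence forces the chain to revisit every branch before closing up.
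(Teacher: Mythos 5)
Your overall strategy---rerun Proposition \ref{accumu} once per branch so as to obtain an adjacent accumulating pair containing each designated branch---is not the paper's argument, and it has a genuine gap at exactly the step you flag as the main obstacle. The chain of alternatives in Proposition \ref{accumu}, started from $L$, can legitimately terminate at $(K,L^-)$ or $(L^-,K^-)$, neither of which contains $L$, and Corollary \ref{closure} does not repair this: the common-closure conclusion only says that each branch lies in the closure of every other branch, whereas adjacency requires accumulation \emph{from the specific quadrant between the two branches}. Nothing in the common closure controls from which side a branch is accumulated, so you cannot force the chain to ``close up'' on a pair containing the branch you started with. (There is also a secondary looseness: property (b) guarantees \emph{some} transversal intersection of $W^u(p)$ and $W^s(p)$ once they meet, not that the particular intersection point on the particular branch you produced is transversal.)

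The paper closes the argument differently, and you should adopt that route: take the single homoclinic point $q$ on one adjacent pair $(L,K)$ provided by the theorem (assumed transversal by the choice of $Q\in\cR$), form the simple closed curve $L[p,q]\ast K[q,p]$, and let $D$ be the domain it bounds in $S^2$. Since all four branches have the same closure (Corollary \ref{closure}) and parts of $L$ and $K$ lie in $D$, the remaining branches $L^-$ and $K^-$ must enter $D$, hence must cross its boundary; as an unstable branch cannot cross $L$ and a stable branch cannot cross $K$, one gets $L^-\cap K\neq\emptyset$ and $K^-\cap L\neq\emptyset$. This yields homoclinic points on all four branches from the one intersection already in hand, with no need to manufacture a separate adjacent accumulating pair for each branch.
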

\begin{proof}
As in the previous proof, let $q$ be a homoclinic intersection of some pair $(L,K)$ of the branches.
Without loss of generality, we assume they intersect transversally at $q$ 
(by the choice of the table $Q\in\cR$).
Then  $L[p,q]\ast K[q,p]$ is a simple closed curve. Let $D$ be the domain enclosed by it.
It is easy to see the other branches, $L^-$ and $K^-$, 
have to enter $D$ (by Corollary \ref{closure}), since some parts of $K$ and $L$ are contained in $D$.
So $L^-$ has to cross $K$, and $K^-$ has to cross $L$. 
The crossing points provide the homoclinic points on these branches.
Again our choice of $\cR$ implies the existence of transversal homoclinic orbits.
This finishes the proof.
\end{proof}

\section*{Acknowledgments}
This research is supported in part by National Science Foundation.

\end{document}